\newtheorem{theorem}{Theorem}[section]
\newtheorem{lemma}[theorem]{Lemma}
\newtheorem{proposition}[theorem]{Proposition}
\newtheorem{corollary}[theorem]{Corollary}
\theoremstyle{definition}
\newtheorem{definition}[theorem]{Definition}
\theoremstyle{remark}
\newtheorem{remark}[theorem]{Remark}
\numberwithin{equation}{section}
\begin{document}

\title[Spaceability and algebrability of nowhere integrable functions ]{Spaceability and algebrability of sets of nowhere integrable functions}

\author[ Sz. G\l \c ab]{ Szymon G\l \c ab}
\address{Institute of Mathematics, Technical University of
\L\'od\'z, W\'olcza\'nska 215, 93-005 \L\'od\'z,   Poland}
\email{szymon.glab@p.lodz.pl}

\author[P. L. Kaufmann]{ Pedro L. Kaufmann }
\address{Instituto de matem\'atica e estat\'istica, Universidade de S\~ao Paulo, Rua do Mat\~ao, 1010, CEP 05508-900, S\~ao Paulo,  Brazil}
\email{plkaufmann@gmail.com}
\thanks{The second author was supported by CAPES, Research Grant PNPD 2256-2009.}

\author[L. Pellegrini]{ Leonardo Pellegrini}
\address{Instituto de matem\'atica e estat\'istica, Universidade de S\~ao Paulo, Rua do Mat\~ao, 1010, CEP 05508-900, S\~ao Paulo,  Brazil}
\email{leonardo@ime.usp.br}

\subjclass[2010]{Primary: 26A30; Secondary: 26A42, 26A39, 26A45}

\keywords{Spaceability,algebrability, nowhere integrable functions, bounded variation functions.  }

\commby{Thomas Schlumprecht}

\begin{abstract}

We show that the set of Lebesgue integrable functions in $[0,1]$ which are nowhere essentially bounded is spaceable, improving a result from \cite{gms}, and that it is strongly $\mathfrak{c}$-algebrable. We prove strong $\mathfrak{c}$-algebrability and non-separable spaceability of the set of functions of bounded variation which have a dense set of jump discontinuities. Applications to sets of Lebesgue-nowhere-Riemann integrable and Riemann-nowhere-Newton integrable functions are presented as corollaries. In addition we prove that the set of Kurzweil integrable functions which are not Lebesgue integrable is spaceable (in the Alexievicz norm) but not $1$-algebrable. We also show that there exists an infinite dimensional vector space $S$ of differentiable functions such that each element of the $C([0,1])$-closure of $S$ is a primitive to a Kurzweil integrable function, in connection to a classic spaceability result from \cite{gurrus}. 

\end{abstract}

\maketitle

\section{Introduction and terminology}

This work is a contribution to the study of large linear and algebraic structures within essentially nonlinear sets of functions which satisfy special properties; the presence of such structures is often described using the terminology \emph{lineable}, \emph{algebrable} and \emph{spaceable}.  Recall that a subset $S$ of a topological vector space $X$ is said to be \emph{lineable} (respectively, \emph{spaceable}) if $S\cup\{0\}$ contains an infinite dimensional vector subspace (respectively, a \emph{closed} infinite dimensional  vector subspace) of $X$; this terminology was first introduced in \cite{eg} (see also \cite{ags}). The term \emph{algebrability} was introduced later in \cite{as}; if $X$ is a linear algebra, $S$ is said to be $\kappa$-algebrable if $S\cup\{0\}$ contains an infinitely generated algebra, with a \emph{minimal} set of generators of cardinality $\kappa$ (see \cite{as} for details). We shall work with a strenghtened notion of $\kappa$-algebrability, namely, \emph{strong $\kappa$-algebrability}. The definition follows:
\begin{definition}
We say that a subset $S$ of an algebra $\mathcal{A}$ is \emph{strongly $\kappa$-algebrable}, where $\kappa$ is a cardinal number, if there exists a $\kappa$-generated free algebra $\mathcal{B}$ contained in $S\cup\{0\}$. 
\end{definition}

We recall that, for a  cardinal number $\kappa$, to say that an algebra $\mathcal{A}$ is a \emph{$\kappa$-generated free algebra}, means that there exists a subset $Z=\{z_\alpha:\alpha<\kappa\}\subset \mathcal{A}$ such that any function $f$ from $Z$ into some algebra $\mathcal{A'}$ can be uniquely extended to a homomorphism from $\mathcal{A}$ into $\mathcal{A'}$. The set $Z$ is called a \emph{set of free generators} of the algebra $\mathcal{A}$. If $Z$ is a set of free generators of some subalgebra $\mathcal{B} \subset \mathcal{A}$, we say that $Z$ is a set of free generators \emph{in} the subalgebra $\mathcal{A}$. If $\mathcal{A}$ is \emph{commutative}, a subset $Z=\{z_\alpha:\alpha<\kappa\} \subset \mathcal{A}$ is a set of free generators in $\mathcal{A}$ if for each polynomial $P$ and for any $z_{\alpha_1},z_{\alpha_2},\dots,z_{\alpha_n} \in Z$ we have 
$$
P(z_{\alpha_1},z_{\alpha_2},\dots,z_{\alpha_n})=0 \mbox{ if and only if } P=0. 
$$
The definition of strong $\kappa$-algebrability was introduced in \cite{bsz}, though in several papers, sets which are shown to be algebrable are in fact strongly algebrable, and that is seen clearly by the proofs. See \cite{as} and \cite{pathos}, among others. 
Strong algebrability is in effect a stronger condition than algebrability: for example, $c_{00}$ is $\omega$-algebrable in $c_0$ but it is not strongly $1$-algebrable (see \cite{bsz}).  \\

All functions in this paper are real valued and defined in $[0,1]$, unless stated otherwise. We are particularly interested in functions which are integrable with respect to some definition of integral, but \emph{nowhere integrable} with respect to a different definition of integral. We clarify that the notation \emph{$f$ is nowhere T-integrable}, for some integration process T, means that $f$ is not T-integrable in any subinterval of the domain. 

In section \ref{seclebnriem}, we show that the set $\mathcal{G}$ of nowhere essentially bounded (not essentially bounded in any subinterval of the domain) Lebesgue integrable functions is spaceable and strongly $\mathfrak{c}$-algebrable. The fact that $\mathcal{G}$ is spaceable improves a result from Garc\'{i}a-Pacheco, Mart\'{i}n, and Seoane-Sep\'ulveda \cite{gms}, which states that the set of Lebesgue integrable functions which are not equivalent to Riemann integrable functions is spaceable. The improvement is seen clearly in Corollary \ref{corolGspaceable}. In section \ref{secriem}, we investigate spaceability and algebrability properties of the set of functions of bounded variation which have a dense set of jump discontinuities - which are, in turn, all Riemann-nowhere-Newton integrable. A connection with a result from \cite{pathos} will be pointed out. 
Section \ref{seckurnleb} is dedicated to Kurzweil integrable functions. We show that the set $\mathcal{J}$ of Kurzweil integrable functions which are not Lebesgue integrable is spaceable, but $\mathcal{J}$ does not contain any nontrivial algebra. We discuss the relation with a result from Gurariy \cite{gurrus} (see also \cite{gur}), which states that the set of differentiable functions is lineable but not spaceable in $C([0,1])$.
At the end of sections \ref{seclebnriem} and \ref{seckurnleb}, we include some problems left open and remarks which stimulate further investigation.  \\


\section{Spaceability and algebrability of sets of Lebesgue integrable functions}
\label{seclebnriem}

Let $\mathcal{G}$ be the set of all Lebesgue integrable functions in $[0,1]$ which are nowhere essentially bounded. The main results is this Section are that $\mathcal{G}$ is spaceable in the $L^1$ norm and that it is strongly $\mathfrak{c}$-algebrable. Our construction of functions in $\mathcal{G}$ involves infinite unions of Cantor sets. We recall that a Cantor set $A$ in $[a,b]$ is a perfect, not countable, nowhere dense subset of $[a,b]$ of diameter $b-a$ which is obtained by subtracting from $[a,b]$ a countable union of open sets in a special way, so that $A$ can measure from zero to strictly less than $b-a$. We shall call these open sets the \emph{holes} of $A$, and use also the following notation:

\begin{definition}

We say that a nonvoid subset $B$ of $[a,b]$ of the form $B=\cup\{A_j:j\in\Gamma\}$ is \emph{Cantor-built (with Cantor components $A_j$)} if  
\begin{enumerate}
\item each $A_j$ is a Cantor set in some subinterval $[c_j,d_j]$ of $[a,b]$; 
\item if $j\neq k$, then $m([c_j,d_j]\cap [c_k,d_k])=0$, where $m$ denotes Lebesgue measure. 
\end{enumerate}

\end{definition} 

\begin{remark} Clearly a Cantor-built set has an at most countable amount of Cantor components. The index set $\Gamma$ is therefore nonvoid and at most countable. 
\end{remark}

For each subset $A \subset [0,1]$ and each subinterval $I=[a,b]\subset [0,1]$, let us denote 
$$A_I \doteq \left\{(b-a)x+a:x\in A\right\}.$$ 
Note that, if $A$ is measurable, then $A_I$ is measurable and $m(A_I)=(b-a)m(A)$.\\

We shall now define a family of functions $f_j$ in $\mathcal{G}$. First we define a family of pairwise disjoint subsets $A_j$ of $[0,1]$ as follows. 
Let $A_1$ be a Cantor set in $[0,1]$ of measure $2^{-1}$. Define 
$$A_2\doteq \cup \{(A_1)_I: I\mbox{ is a hole of } A_1\}.$$
Then $A_2$ is Cantor-built, with Cantor components $(A_1)_I$ like above, and $m(A_2)=2^{-2}$. 
Let $j\geq 3$, suppose that $A_j\subset [0,1]$ is Cantor-built, and that $m(A_j)=2^{-j}$. Define 
$$A_{j+1}\doteq \cup \{(A_1)_I: I\mbox{ is a hole of a Cantor component of } A_j\}.$$
Then $A_{j+1}$ is Cantor-built, with Cantor components $(A_1)_I$,  and $m(A_{j+1})=2^{-(j+1)}$. 
We have obtained a sequence $(A_j)$ of Cantor-built subsets of $[0,1]$ satisfying 
\begin{enumerate}
\item $m(A_j\cap A_k) = 0$, when $j\neq k$; 
\item $m(\cup_j A_j) = 1$ (thus $\cup_j A_j$ is dense in $[0,1]$);
\item for each subinterval $J\subset [0,1]$, there is a natural number $j_0$ such that $A_j$ has a Cantor component contained in $J$ for each $j\geq j_0$.
\end{enumerate}
Let $1<\theta <2$, write $\Theta \doteq \sum_j (\theta /2)^j$ and define, for each $j$, $f_j\doteq \theta^j \chi_{A_j}$. 
Suppose that, for each natural number $k$, $(n_j^k)_j$ is a strictly increasing sequence of natural numbers, and that those sequences are pairwise disjoint. For each $k$ define 
$$g_k \doteq \sum_j f_{n_j^k}.$$
Note that each $g_k\in \mathcal{G}$, and 
$$0 < \int g_k = \sum_j \int f_{n_j^k} = \sum_j \left(\frac{\theta}{2}\right)^{n_j^k} <\Theta;$$
moreover, $\{g_k\}_k$ is linearly independent in $L^1$, since the sets $\{x\in [0,1]: g_k(x) \neq 0\}$ all have positive measure and $m(\{x\in [0,1]: g_k(x) \neq 0\} \cap \{x\in [0,1]: g_j(x) \neq 0\})$ is zero whenever $j\neq k$.

\begin{theorem}

$\overline{span(\{g_k\}_k)} \subset \mathcal{G} \cup\{0\}$. In particular, $\mathcal{G}$ is spaceable in $L^1$. 

\label{Gspaceable}
\end{theorem}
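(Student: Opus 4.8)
The plan is to show that every nonzero element of $\overline{\operatorname{span}(\{g_k\}_k)}$ belongs to $\mathcal{G}$, i.e.\ is Lebesgue integrable (automatic, since we are in $L^1$) and nowhere essentially bounded. Fix $f\in\overline{\operatorname{span}(\{g_k\}_k)}$, $f\neq 0$, and let $(h_m)_m$ be a sequence of finite linear combinations $h_m=\sum_k a_k^{(m)}g_k$ with $\|h_m-f\|_{L^1}\to 0$. By passing to a subsequence we may assume $h_m\to f$ pointwise almost everywhere; we may also assume $\|h_m - f\|_{L^1} < 4^{-m}$, so that $\sum_m \|h_m - f\|_{L^1} < \infty$ and hence $\sum_m |h_m - f| < \infty$ a.e.

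The key point is the disjointness-of-supports structure. Fix a subinterval $J\subseteq[0,1]$; I want to show $f$ is not essentially bounded on $J$. First, I claim $f$ is not a.e.\ zero on $J$: indeed some $g_{k_0}$ appears with nonzero coefficient in the first $h_m$ we care about, and by property (3) of the sequence $(A_j)$ every $g_k$ is supported on a set meeting $J$ in positive measure; a short argument using $L^1$-convergence and the pairwise (a.e.) disjointness of the supports $\{g_k\neq 0\}$ shows that on $J$ the function $f$ must inherit the behaviour of at least one $g_k$ with $a_k^{(m)}\not\to 0$. More precisely, since $\{g_k\neq0\}$ are a.e.\ disjoint, on each such support $f$ agrees a.e.\ with $(\lim_m a_k^{(m)})\,g_k$, and this limit exists and is nonzero for at least one $k$ (otherwise $f=0$ a.e.). Call that index $k$ and that limit $a\neq 0$.

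Now restrict attention to that single $g_k = \sum_j \theta^{n_j^k}\chi_{A_{n_j^k}}$. By property (3), for all large $j$ the Cantor-built set $A_{n_j^k}$ has a Cantor component inside $J$; such a component has positive measure, and on it $g_k$ takes the value $\theta^{n_j^k}\to\infty$. Hence on $J$ the function $a g_k$ takes arbitrarily large values on sets of positive measure, so $a g_k$ — and therefore $f$, which equals $a g_k$ a.e.\ on the support of $g_k$ — is not essentially bounded on $J$. Since $J$ was arbitrary, $f\in\mathcal{G}$. Finiteness of the sum defining $g_k$ in $L^1$ (already checked: $\int g_k<\Theta<\infty$) guarantees $f\in L^1$, completing the proof that $\overline{\operatorname{span}(\{g_k\}_k)}\subset\mathcal{G}\cup\{0\}$; spaceability follows since $\{g_k\}_k$ was shown to be linearly independent, so the closed span is infinite-dimensional.

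The main obstacle is the step identifying $f$, on the support of each $g_k$, with a fixed scalar multiple of $g_k$: one must rule out that the coefficients $a_k^{(m)}$ oscillate or that mass escapes to infinitely many indices $k$ simultaneously. This is handled by the a.e.\ disjointness of the supports together with the uniform lower bound $m(\{g_k\neq0\})>0$ and $\inf_k\int g_k>0$ (in fact $\int g_k \ge (\theta/2)^{n_1^k}$ is not uniformly bounded below, so care is needed — one instead argues locally on a single fixed support set, where $L^1$-convergence of $h_m$ forces $a_k^{(m)}$ to converge). Everything else is routine measure theory.
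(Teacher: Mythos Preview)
Your approach is correct and ultimately reaches the same representation $f=\sum_k a_k g_k$ that the paper obtains, but by a different mechanism. The paper observes in one line that $(g_k)$ is a \emph{monotone basic sequence}: since the supports are a.e.\ disjoint, $\bigl\|\sum_{k=1}^{m_1} a_k g_k\bigr\|\le\bigl\|\sum_{k=1}^{m_2} a_k g_k\bigr\|$ whenever $m_1<m_2$, so every $f$ in the closed span has a unique expansion $f=\sum_k \alpha_k g_k$. From there the paper picks any $k_0$ with $\alpha_{k_0}\neq 0$ and finishes exactly as in your third paragraph.

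You instead extract the coefficients by hand: pass to an a.e.-convergent subsequence, note that on $\operatorname{supp}(g_k)$ each $h_m$ equals $a_k^{(m)}g_k$, and deduce that $a_k^{(m)}$ converges because $f(x)/g_k(x)$ must then be a.e.\ constant on that support. This is valid and more elementary---no Schauder-basis vocabulary required---but the writeup is tangled. You fix $J$ before doing the $J$-independent work of producing $k$ with $a_k\neq 0$; the intermediate claim ``$f$ is not a.e.\ zero on $J$'' is a red herring (you never use it, and it is weaker than what you go on to prove); and the one genuinely delicate step---convergence of $a_k^{(m)}$ despite the lack of a uniform lower bound on $\|g_k\|$---is relegated to a parenthetical in your final paragraph when it should be the heart of the argument. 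The paper's basic-sequence observation buys you all of this structure for free.
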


\begin{proof} For each natural numbers $m_1 < m_2$ and each $a_1,\dots,a_{m_2}\in \mathbb{R}$ the inequality $\|\sum_{k=1}^{m_1}a_k g_k\|\leq \|\sum_{k=1}^{m_2}a_k g_k\|$ holds, thus $(g_k)_k$ is a basic sequence, and therefore a Schauder basis to $\overline{span(\{g_k\}_k)}$. It follows that, for a given nonzero $f\in \overline{span(\{g_k\}_k)}$, there is a nonzero sequence $(\alpha_k)_k$ of real numbers satisfying
\begin{eqnarray}
f = \sum_k \alpha_k g_k.
\label{deff}
\end{eqnarray}
Suppose  $\alpha_{k_0}\neq 0$, and let $J\subset [0,1]$ be an open interval. There exists a natural number $j_0$ such that there is a Cantor component $B_j$ of $A_{n_{j}^{k_0}}$ contained in $J$, for each $j\geq j_0$. Since each $B_j$ has positive measure, $f|_{B_j}\equiv \theta^{n_j^{k_0}}$ and $\{\theta^{n_j^{k_0}}:j\in \mathbb{N}  \}$ is unbounded, it follows that $f$ is not essentially bounded in $J$.
\end{proof}

Since Riemann integrable functions are bounded, we have the following: 

\begin{corollary}

The set of Lebesgue integrable functions in $[0,1]$ which are not equivalent to a Riemann integrable function in any subinterval of the domain is spaceable. 

\label{corolGspaceable}
\end{corollary}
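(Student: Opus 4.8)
The plan is to deduce this directly from Theorem~\ref{Gspaceable} by observing that $\mathcal{G}$ is already contained in the set under consideration. The one ingredient needed is the elementary fact that every Riemann integrable function on a compact interval is bounded. So I would fix $f\in\mathcal{G}$ and an arbitrary subinterval $I\subset[0,1]$ and argue by contradiction: if $f$ were equivalent on $I$ to some Riemann integrable $g\colon I\to\mathbb{R}$, that is $f=g$ almost everywhere on $I$, then $g$ would be bounded, and hence $f$ would be essentially bounded on $I$ --- contradicting the hypothesis that $f$ is nowhere essentially bounded. Since $f$ is, moreover, Lebesgue integrable, this shows $f$ belongs to the set in the statement, and therefore $\mathcal{G}$ is a subset of it.

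With the inclusion $\mathcal{G}\subset\{\text{Lebesgue integrable functions not equivalent to a Riemann integrable function in any subinterval}\}$ established, the conclusion is immediate: by Theorem~\ref{Gspaceable} the subspace $\overline{span(\{g_k\}_k)}$ is a closed infinite dimensional subspace of $L^1$ with $\overline{span(\{g_k\}_k)}\subset\mathcal{G}\cup\{0\}$, hence it is contained in the union of $\{0\}$ with the set in the statement. This is precisely a witness of spaceability, so the proof is complete.

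I expect no genuine obstacle here; the only point requiring a little care is semantic, namely to parse ``not equivalent to a Riemann integrable function in any subinterval'' as ``for every subinterval $I\subset[0,1]$ there is no Riemann integrable $g$ on $I$ with $f=g$ almost everywhere on $I$'', and to notice that nowhere essential boundedness is exactly the obstruction to the existence of such a $g$.
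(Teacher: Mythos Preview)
Your proof is correct and follows essentially the same approach as the paper: the paper simply notes that Riemann integrable functions are bounded and states the corollary as an immediate consequence of Theorem~\ref{Gspaceable}, which is precisely what you spell out in detail.
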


We will now study the strong algebrability aspect of the set $\mathcal{G}$. First, note that the construction of the sequence of Cantor-Built sets $A_j$ is associated to the convergent series $\sum_j 2^{-j}$. It is not hard to see that with some technical effort we can build a similar sequence of Cantor-built sets $B_j$ in $[0,1]$ associated to the series $\sum_j \frac{1}{e} \frac{1}{j!}$, satisfying $m(B_j) = \frac{1}{e} \frac{1}{j!}$ and the $A_j$-like properties:

\begin{enumerate}
\item $m(B_j\cap B_k) = 0$, when $j\neq k$; 
\item $m(\cup_j B_j) = 1$;
\item for each subinterval $J\subset [0,1]$, there is a natural number $j_0$ such that $B_j$ has a Cantor component contained in $J$ for each $j\geq j_0$.

\end{enumerate}

Let $\{\theta_\alpha:\alpha<\mathfrak{c}\}$ a set of real numbers strictly greater than $1$ such that the set $\{\ln(\theta_\alpha):\alpha<\mathfrak{c}\}$ is linearly independent over the rational numbers. For each $\alpha<\mathfrak{c}$, define 
$$
g_\alpha=\sum_{j=1}^\infty\theta^j_\alpha\chi_{B_j}.
$$
For each $\alpha$ the series $\sum_j \frac{\theta_\alpha^j}{j!}$ converges, thus each $g_\alpha$ is Lebesgue integrable.

\begin{theorem}

$\{g_\alpha:\alpha<\mathfrak{c}\}$ is a set of free generators, and the algebra generated by this set is contained in $\mathcal{G}\cup\{0\}$. In particular, $\mathcal{G}$ is strongly $\mathfrak{c}$-algebrable.  

\label{lnebalg}
\end{theorem}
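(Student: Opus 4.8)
The plan is to verify the two assertions separately: first that $\{g_\alpha : \alpha < \mathfrak{c}\}$ is a set of free generators, and second that every nonzero element of the generated algebra lies in $\mathcal{G}$. For the freeness, since the algebra of measurable functions is commutative, I need to show that for any polynomial $P$ in $n$ variables with no constant term and any distinct indices $\alpha_1,\dots,\alpha_n$, the function $P(g_{\alpha_1},\dots,g_{\alpha_n})$ is not a.e.\ zero unless $P \equiv 0$. The key observation is that on each Cantor component of $B_j$, every $g_{\alpha_i}$ is constantly equal to $\theta_{\alpha_i}^j$, so $P(g_{\alpha_1},\dots,g_{\alpha_n})$ is constantly $P(\theta_{\alpha_1}^j,\dots,\theta_{\alpha_n}^j)$ there; since these components have positive measure, it suffices to show that $P(\theta_{\alpha_1}^j,\dots,\theta_{\alpha_n}^j) \neq 0$ for some $j$. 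Writing $P(x_1,\dots,x_n) = \sum_{\beta} c_\beta x_1^{\beta_1}\cdots x_n^{\beta_n}$ over a finite set of multi-indices $\beta$, evaluation at $x_i = \theta_{\alpha_i}^j$ gives $\sum_\beta c_\beta (\theta_{\alpha_1}^{\beta_1}\cdots\theta_{\alpha_n}^{\beta_n})^j = \sum_\beta c_\beta \lambda_\beta^j$ where $\lambda_\beta \doteq \prod_i \theta_{\alpha_i}^{\beta_i}$. The rational linear independence of $\{\ln\theta_\alpha\}$ forces the positive reals $\lambda_\beta$ to be pairwise distinct for distinct $\beta$ (since $\ln\lambda_\beta = \sum_i \beta_i \ln\theta_{\alpha_i}$ and $\beta \mapsto \sum_i \beta_i\ln\theta_{\alpha_i}$ is injective on multi-indices). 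A finite exponential sum $\sum_\beta c_\beta \lambda_\beta^j$ with distinct positive bases $\lambda_\beta$ and not all $c_\beta$ zero cannot vanish for all $j \in \mathbb{N}$ — this is the standard Vandermonde/generalized-Dirichlet argument (the dominant term with largest $\lambda_\beta$ wins as $j\to\infty$). Hence $P(g_{\alpha_1},\dots,g_{\alpha_n})$ is nonzero on a positive-measure set, so it is nonzero in $L^1$ and in particular as a function.

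For the containment in $\mathcal{G}\cup\{0\}$, take a nonzero $h = P(g_{\alpha_1},\dots,g_{\alpha_n})$ with $P$ having no constant term, and fix an arbitrary subinterval $J\subset[0,1]$. By property (3) of the $B_j$'s there is $j_0$ such that for every $j\geq j_0$ the set $A_{n_j}$ — more precisely $B_j$ — has a Cantor component $C_j \subset J$. On $C_j$ we have $h \equiv P(\theta_{\alpha_1}^j,\dots,\theta_{\alpha_n}^j) = \sum_\beta c_\beta \lambda_\beta^j$; letting $\lambda^\ast = \max\{\lambda_\beta : c_\beta \neq 0\}$ and noting $\lambda^\ast > 1$ (each $\theta_\alpha > 1$ and at least one $\beta$ with $c_\beta\neq 0$ has $\sum_i\beta_i \geq 1$), the quantity $\sum_\beta c_\beta \lambda_\beta^j$ grows like $c_{\beta^\ast}(\lambda^\ast)^j$ in absolute value and hence is unbounded in $j$. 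Since each $C_j$ has positive measure, $h$ takes arbitrarily large absolute values on positive-measure subsets of $J$, so $h$ is not essentially bounded on $J$. As $J$ was arbitrary, $h$ is nowhere essentially bounded. Finally $h$ is Lebesgue integrable: on $B_j$ it is bounded by $\sum_\beta |c_\beta| \lambda_\beta^j \leq C (\lambda^\ast)^j$ for a constant $C$, the $B_j$ are essentially disjoint with $m(B_j) = \frac1e\frac1{j!}$ and $\cup_j B_j$ has full measure, so $\int |h| \leq \sum_j C(\lambda^\ast)^j \frac1e \frac1{j!} = \frac{C}{e} e^{\lambda^\ast} < \infty$. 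Thus $h \in \mathcal{G}$, which establishes strong $\mathfrak{c}$-algebrability.

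The main obstacle is the freeness step — specifically, arguing cleanly that the exponential sum $\sum_\beta c_\beta\lambda_\beta^j$ cannot vanish identically in $j$. This reduces, via the injectivity coming from $\mathbb{Q}$-linear independence of the logarithms, to the classical fact that characters $j \mapsto \lambda_\beta^j$ for distinct $\lambda_\beta > 0$ are linearly independent; one can phrase it through a Vandermonde determinant in $\lambda_{\beta_1},\dots,\lambda_{\beta_m}$ applied to finitely many values $j=1,\dots,m$, or simply by examining the asymptotically dominant term. Everything else — the constancy of each $g_\alpha$ on Cantor components, the positivity of measures, the summability estimate for integrability — is routine bookkeeping with the properties of the $B_j$ already secured in the construction preceding the statement.
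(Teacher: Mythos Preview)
Your proof is correct and follows essentially the same approach as the paper: both arguments reduce a general algebra element to the sum $\sum_\beta c_\beta \lambda_\beta^j$ on $B_j$, use $\mathbb{Q}$-linear independence of the $\ln\theta_\alpha$ to get the $\lambda_\beta$ pairwise distinct, and then let the dominant term $\lambda^\ast>1$ force unboundedness on Cantor components in every subinterval, with integrability coming from the factorial decay of $m(B_j)$. The only organizational difference is that you separate the freeness and the $\mathcal{G}$-containment into two steps, whereas the paper proves directly that every nonzero algebra element lies in $\mathcal{G}$ (which already implies freeness); the mathematical content is the same.
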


\begin{proof} It suffices to show that, for every $m$ and $n$ positive integers, for every matrix $(k_{ij}:i=1,\dots, m,\, j=1,\dots, n)$ of non-negative integers with non-zero and distinct rows, for every $\alpha_1,\dots,\alpha_n <\mathfrak{c}$ and for every $\beta_1,\dots,\beta_m\in\mathbb{R} $ which do not vanish simultaneously, the function
$$
g=\beta_1g_{\alpha_1}^{k_{11}}\dots g_{\alpha_n}^{k_{1n}}+\dots+ \beta_mg_{\alpha_1}^{k_{m1}}\dots g_{\alpha_n}^{k_{mn}}
$$
is in $\mathcal{G}$. $g$ is in $L^1$ since  
\begin{align*}
\int |g| & \leq \int \left[\sum_{j=1}^\infty(|\beta_1|(\theta_{\alpha_1}^{k_{11}}\cdots \theta_{\alpha_n}^{k_{1n}})^j+...+ |\beta_m|(\theta_{\alpha_1}^{k_{m1}}\cdots \theta_{\alpha_n}^{k_{mn}})^j)\chi_{B_j}\right]\\
& = \frac1e \sum_{j=1}^\infty\frac{|\beta_1|(\theta_{\alpha_1}^{k_{11}}\cdots \theta_{\alpha_n}^{k_{1n}})^j+...+ |\beta_m|(\theta_{\alpha_1}^{k_{m1}}\cdots \theta_{\alpha_n}^{k_{mn}})^j}{j!} <\infty.
\end{align*}

Since $\ln(\theta_{\alpha_1}^{k_{i1}}\cdots \theta_{\alpha_n}^{k_{in}})=
k_{i1}\ln\theta_{\alpha_1}+...+k_{in}\ln\theta_{\alpha_n}$
and $\ln\theta_{\alpha_1},...,\ln\theta_{\alpha_n}$ are $\mathbb{Q}$-linearly independent, the numbers $\ln(\theta_{\alpha_1}^{k_{i1}}\cdots \theta_{\alpha_n}^{k_{in}})$, $i=1,...,m$, are distinct. Then by the strict monotonicity of the logarithmic function we may assume that 
\begin{eqnarray}
\theta_{\alpha_1}^{k_{11}}\cdots \theta_{\alpha_n}^{k_{1n}}>\theta_{\alpha_1}^{k_{21}}\cdots \theta_{\alpha_n}^{k_{2n}}>...>\theta_{\alpha_1}^{k_{m1}}\cdots \theta_{\alpha_n}^{k_{mn}};
\label{thetas}
\end{eqnarray}
we also may assume $\beta_1\neq 0$. To simplify the notation put $\theta_i=\theta_{\alpha_1}^{k_{i1}}\cdots \theta_{\alpha_n}^{k_{in}}$. Then we can write
$$
g=\sum_{j=1}^\infty(\beta_1\theta^j_1+...+\beta_m\theta^j_m)\chi_{B_j}.
$$
From (\ref{thetas}) and since $\beta_1$ is assumed to be nonzero, we can find $j_0\in\mathbb{N}$ such that 
$$
|\beta_2|\theta^j_2+...+|\beta_m|\theta^j_m<\frac12 |\beta_1|\theta^j_1
$$
for all $j\geq j_0$. Then for those $j$
\begin{align*}
|\beta_1\theta^j_1+...+\beta_m\theta^j_m| &\geq |\beta_1|\theta^j_1 - \left| \beta_2 \theta^j_2 +...+ \beta_m\theta^j_m\right|\\
& \geq |\beta_1|\theta^j_1 - \left( |\beta_2| \theta^j_2 +...+ |\beta_m| \theta^j_m\right)\\
& >  \frac12 |\beta_1|\theta^j_1. 
\end{align*}
Since each nonvoid open subset of $[0,1]$ intercepts all $B_j$ in a non-null set (for high enough $j$), the inequality above shows that $g$ is nowhere essentially bounded. \end{proof}

In parallel to Corollary \ref{corolGspaceable}, we have the following:

\begin{corollary}

The set of Lebesgue integrable functions which are not equivalent to a Riemann integrable function in any subinterval of the domain is strongly $\mathfrak{c}$-algebrable.  

\end{corollary}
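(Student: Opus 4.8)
The plan is to deduce this corollary from Theorem \ref{lnebalg} by a simple set inclusion, exactly as Corollary \ref{corolGspaceable} was deduced from Theorem \ref{Gspaceable}. Let $\mathcal{H}$ denote the set of Lebesgue integrable functions on $[0,1]$ which are not equivalent to a Riemann integrable function in any subinterval of the domain; here, as in Corollary \ref{corolGspaceable}, ``not equivalent to a Riemann integrable function in any subinterval'' is read as: for every subinterval $J\subseteq[0,1]$, the restriction $f|_J$ fails to be almost everywhere equal to a Riemann integrable function on $J$. The goal is to show $\mathcal{H}$ is strongly $\mathfrak{c}$-algebrable, and the key observation is that $\mathcal{G}\subseteq\mathcal{H}$.

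First I would recall that, by Theorem \ref{lnebalg}, $\{g_\alpha:\alpha<\mathfrak{c}\}$ freely generates an algebra $\mathcal{B}$ with $\mathcal{B}\subseteq\mathcal{G}\cup\{0\}$. Next I would verify the inclusion $\mathcal{G}\subseteq\mathcal{H}$: given $f\in\mathcal{G}$ and a subinterval $J\subseteq[0,1]$, if $f|_J$ were a.e.\ equal on $J$ to a Riemann integrable function $h\colon J\to\mathbb{R}$, then $h$ would be bounded, say $|h|\le M$ on $J$, and hence $|f|\le M$ a.e.\ on $J$, contradicting that $f$ is nowhere essentially bounded. Thus $f\in\mathcal{H}$. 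Since $\mathcal{G}\subseteq\mathcal{H}$, we get $\mathcal{B}\subseteq\mathcal{G}\cup\{0\}\subseteq\mathcal{H}\cup\{0\}$, so $\mathcal{B}$ is a $\mathfrak{c}$-generated free subalgebra contained in $\mathcal{H}\cup\{0\}$, which is precisely strong $\mathfrak{c}$-algebrability of $\mathcal{H}$; note that strong $\kappa$-algebrability, being only the requirement that such a free algebra lie inside $S\cup\{0\}$, automatically passes to any superset of $S$.

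There is no real obstacle here: the corollary is the algebra counterpart of the one-line proof of Corollary \ref{corolGspaceable}, and the only mathematical content is the elementary fact that a Riemann integrable function on an interval is bounded, hence essentially bounded there. The only point requiring a little care is the interpretation of the phrase ``not equivalent to a Riemann integrable function in any subinterval,'' but once this is read as above, the inclusion $\mathcal{G}\subseteq\mathcal{H}$ is immediate and the result follows.
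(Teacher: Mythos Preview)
Your proposal is correct and follows exactly the approach intended by the paper: the corollary is stated there without proof, prefaced only by ``In parallel to Corollary \ref{corolGspaceable}, we have the following,'' and Corollary \ref{corolGspaceable} itself was justified by the single line ``Since Riemann integrable functions are bounded.'' Your argument makes this explicit via the inclusion $\mathcal{G}\subseteq\mathcal{H}$, which is precisely the content the paper leaves to the reader.
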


\subsection{Further questions on the algebrability of $\mathcal{G}$} 

It is unknown to the authors wether $\mathcal{G} \cup\{0\}$ admits a \emph{closed} algebra within it. We do not even know wether the free algebra constructed in the proof of Proposition \ref{lnebalg} has its closure contained in $\mathcal{G} \cup\{0\}$; a positive answer would imply automatically on the spaceability of $\mathcal{G}$. 

By another point of view, note the following:

\begin{proposition}
The set of nowhere essentially bounded functions is co-meager in $L^1$. 
\end{proposition}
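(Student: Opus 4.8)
The plan is to show that the set $\mathcal{NEB}$ of nowhere essentially bounded functions contains a dense $G_\delta$ subset of $L^1([0,1])$. Let $\{J_n : n\in\mathbb{N}\}$ be an enumeration of all subintervals of $[0,1]$ with rational endpoints, and for each pair $n,N\in\mathbb{N}$ set
$$
U_{n,N} \doteq \left\{ f\in L^1([0,1]) : m\bigl(\{x\in J_n : |f(x)| > N\}\bigr) > 0 \right\}.
$$
A function $f$ fails to be essentially bounded on $J_n$ precisely when $f\in U_{n,N}$ for every $N$, and $f$ is nowhere essentially bounded precisely when $f$ is unbounded on every (arbitrary) subinterval, which by monotonicity is equivalent to being unbounded on every rational subinterval. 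Hence $\mathcal{NEB} = \bigcap_{n,N} U_{n,N}$, and it suffices to prove each $U_{n,N}$ is open and dense in $L^1$.

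For openness: I would show the complement $L^1\setminus U_{n,N} = \{f : |f|\le N \text{ a.e.\ on } J_n\}$ is closed. If $f_\ell\to f$ in $L^1$ with each $f_\ell$ bounded a.e.\ by $N$ on $J_n$, pass to a subsequence converging a.e.\ to $f$; then $|f|\le N$ a.e.\ on $J_n$ as well. So the complement is closed and $U_{n,N}$ is open. For density: fix $f\in L^1$ and $\varepsilon>0$. I would perturb $f$ inside a tiny subinterval $[c,d]\subset J_n$ chosen so small that $\int_{[c,d]}|f| < \varepsilon/2$, by adding a function $h$ supported on $[c,d]$ of the form $h = t\,\chi_{E}$ where $E\subset[c,d]$ has small measure $m(E) < \varepsilon/(2t)$ and $t > N$; then $\|h\|_1 < \varepsilon/2$, so $\|(f-\chi_{[c,d]}f + h) - f\|_1 \le \int_{[c,d]}|f| + \|h\|_1 < \varepsilon$, and the perturbed function has $|{\cdot}| > N$ on the positive-measure set $E\subset J_n$, hence lies in $U_{n,N}$. (Replacing $f$ by $f - \chi_{[c,d]}f$ first is just to make the bound clean; one could equally well add $h$ directly.) Thus $U_{n,N}$ is dense.

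Applying the Baire category theorem in the complete metric space $L^1([0,1])$, the intersection $\bigcap_{n,N} U_{n,N} = \mathcal{NEB}$ is a dense $G_\delta$, hence co-meager. The only point demanding a little care is the density step — specifically, verifying that an arbitrarily $L^1$-small perturbation can be made to exceed the threshold $N$ on a set of positive measure; this is easy because $L^1$-smallness constrains $\int|h|$ but places no ceiling on the pointwise size of $h$, so a tall thin spike does the job. Everything else is routine.
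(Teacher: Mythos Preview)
Your proof is correct and follows essentially the same approach as the paper: decompose the complement of $\mathcal{NEB}$ as a countable union, indexed by rational subintervals $J_n$ and thresholds $N$, of the sets $\{f : |f|\le N \text{ a.e.\ on }J_n\}$, and show each such set is nowhere dense by perturbing with a tall spike on a short subinterval. Your version is in fact a bit cleaner---you explicitly verify closedness of the complement and handle the density step for arbitrary $f$ (the paper's estimate $\int_J|2N-f|\le 3N\,m(J)$ tacitly assumes $|f|\le N$ on $J$), but the strategy is the same.
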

 
\begin{proof} Let $N\in \mathbb{N} $ and $I$ be any subinterval of $[0,1]$ with rational endpoints. Let $M=\{f\in L^1:|f(x)|\leq N $ for a.e. $x\in I\}$. To end the proof it suffices to show that $M$ is nowhere dense. 

Let $B(f,R)$ be a ball in $L^1$ centered in some $f$ with a radius $R>0$, and take a subinterval $J$ of $I$ with length $\frac{R}{6N}$. Define $g:[0,1]\to \mathbb{R} $ by
$$
g(x)=\left\{ \begin{array}{ll}
2N, & \mbox{if } x\in J,\\
f(x), & \mbox{if } x\in[0,1]\setminus J.
\end{array}\right.
$$
Then 
$$
\|g-f\|=\int_{J}\|2N-f\|\leq m(J)3N=\frac{R}{6N}\cdot 3N=\frac{R}{2}. 
$$     
Consider the ball $B(g,R/7)$, and let $h\in B(g,R/7)$. Then $h\in M$ would imply 
$$
R/7\geq\|h-g\|=\int_0^1|h-g|\geq\int_{J}|h-g|\geq m(J)N=R/6,
$$
a contradiction. \end{proof}

In particular, $\mathcal{G}$ is dense in $L^1$. This motivates the search of algebras within $\mathcal{G}\cup\{0\}$ which are dense in $L^1$. It is not of the knowledge of the authors wether these algebras exist. Note that the algebra constructed in the proof of Proposition \ref{lnebalg} cannot be dense, since functions contained in the closure are all constant when restricted to each $B_j$.


\section{Spaceability and algebrability of sets of functions of bounded variation}
\label{secriem}

In Theorem 4.1 from \cite{pathos}, Garc\'ia-Pacheco, Palmberg and Seoane-Sep\'ulveda  showed that the set of bounded functions with removable singularities in each rational, and everywhere else continuous and zero-valued, is $\mathfrak{c}$-lineable and $\mathfrak{c}$-algebrable. In particular, this shows that the set of Riemann integrable, nowhere Newton integrable\footnote{Recall that we say that a function $F: [a,b] \rightarrow \mathbb{R}  $ is the \emph{antiderivative} of $f: [a,b]\rightarrow \mathbb{R} $ if $F$ is continuous and $F' = f$ in $(a,b)$. Some authors refer to functions which have antiderivatives as \emph{Newton integrable}, and this terminology is naturally convenient for us.} functions is $\mathfrak{c}$-lineable and $\mathfrak{c}$-algebrable. This comes from the fact that bounded, a.e. continuous functions are all Riemann integrable, and derivatives never have removable discontinuities. 

In this section we investigate the set, which we will denote by $\mathcal{F}$, of functions of bounded variation with a dense set of jump discontinuities. This set is related to the mentioned result of \cite{pathos}, since each element of $\mathcal{F}$ is also Riemann integrable and nowhere Newton integrable; only this time the nowhere Newton integrability comes from the fact that derivatives do not have \emph{jump} discontinuities. Let us denote by $BV[0,1]$ the set of real-valued, left-continuous functions of bounded variation, equipped with the norm 
$$
\|f\| \doteq |f(0)| + V(f),
$$
where $V(f)$ is the total variation of $f$. Recall that, equipped with this norm, $BV[0,1]$ is a non-separable Banach space. Fix   an enumeration $\{q_i: i\in\mathbb{N}\}$  of all rational numbers in $(0,1)$, and consider
\begin{eqnarray}
f=\sum_{n=1}^\infty 2^{-i}\chi_{(q_i,1]}. 
\label{ourf}
\end{eqnarray}
Then $f$ belongs to $\mathcal{F}$; more specifically, it is monotone, continuous in $[0,1]\setminus \mathbb{Q}$, and has a jump discontinuity in each rational number in $(0,1)$. Constructions based on this function will be used to prove the main results of this section, namely Theorems \ref{nonsep} and \ref{Falg}. 

\begin{theorem}

There is a non-separable closed subspace of $BV[0,1]$ contained in $\mathcal{F} \cup \{0\}$. 

\label{nonsep}
\end{theorem}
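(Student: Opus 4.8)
The plan is to build, for each subset $E$ of an uncountable almost-disjoint family of subsets of $\mathbb{N}$, a function $f_E \in \mathcal{F}$ by summing the ``building block'' jumps only over rationals indexed by $E$, and then to show that the closed span of $\{f_E\}$ stays inside $\mathcal{F}\cup\{0\}$ and is non-separable. First I would fix an almost disjoint family $\{E_\alpha : \alpha < \mathfrak{c}\}$ of infinite subsets of $\mathbb{N}$ (so that $E_\alpha\cap E_\beta$ is finite for $\alpha\neq\beta$), but with the extra feature, obtainable by a standard diffusion/interval-coding argument, that \emph{each} $E_\alpha$ still indexes a dense set of rationals $\{q_i : i\in E_\alpha\}$ in $(0,1)$. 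Then set
$$
f_\alpha \doteq \sum_{i\in E_\alpha} 2^{-i}\chi_{(q_i,1]},
$$
which lies in $\mathcal{F}$ for the same reasons as the function $f$ in \eqref{ourf}: it is a uniformly convergent sum of nonnegative monotone jump functions, hence of bounded variation and left-continuous, and it has a genuine jump of size $2^{-i}$ at each $q_i$ with $i\in E_\alpha$, so its discontinuity set is dense.

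Next I would verify that every nonzero element of $\overline{\operatorname{span}}\{f_\alpha : \alpha<\mathfrak{c}\}$ is again in $\mathcal{F}$. The key point is that the jump of $f_\alpha$ at $q_i$ equals the coefficient $2^{-i}$ exactly when $i\in E_\alpha$ and equals $0$ otherwise, and that $\|\cdot\|$ dominates the supremum of the jumps; so in a convergent combination $g=\sum_n c_n f_{\alpha_n}$ the jump of $g$ at $q_i$ is $2^{-i}\sum\{c_n : i\in E_{\alpha_n}\}$, a series that converges absolutely since $\sum_n|c_n|<\infty$ once we normalize the $f_\alpha$ (or simply bound by the $BV$ norm). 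If $g\neq 0$ then some $c_{n_0}\neq 0$; using almost-disjointness, for all but finitely many $i\in E_{\alpha_{n_0}}$ the index $i$ belongs to no other $E_{\alpha_n}$, so the jump of $g$ at $q_i$ is exactly $c_{n_0}2^{-i}\neq 0$. Since $\{q_i : i\in E_{\alpha_{n_0}}\}$ is dense, $g$ has a dense set of jump discontinuities, hence $g\in\mathcal{F}$. Closedness is automatic since we take the closure; the only thing to check is that $0$ is the only possible ``bad'' limit, which is exactly what the jump computation above gives.

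For non-separability, I would show the $f_\alpha$ are uniformly separated in norm: for $\alpha\neq\beta$, $f_\alpha - f_\beta$ has, at each $q_i$ with $i$ in the symmetric difference $E_\alpha\triangle E_\beta$, a jump of size $2^{-i}$, and since each $E_\alpha\triangle E_\beta$ is infinite (indeed cofinite in each $E_\alpha$ would be too strong, but infiniteness suffices) and contains small indices or can be arranged to contain index $1$ or $2$ by a relabeling, one gets a lower bound on $V(f_\alpha - f_\beta)$; more carefully, arrange the family so that $1\in E_\alpha\triangle E_\beta$ fails is impossible, or just observe that the total variation $V(f_\alpha-f_\beta)=\sum_{i\in E_\alpha\triangle E_\beta}2^{-i}$ is bounded below by $2^{-i_0}$ where $i_0=\min(E_\alpha\triangle E_\beta)$; to get a \emph{uniform} bound one builds the almost disjoint family inside a fixed ``tree'' so that any two members first differ at a bounded level. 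A clean alternative is to index by branches of the binary tree $2^{<\omega}$ and use that two distinct branches agree on an initial segment but this still only gives positive, not uniformly positive, separation — so the honest fix is: partition $\mathbb{N}$ into blocks and force every $f_\alpha$ to contain a fixed small-index jump while differing on the tail, giving $\|f_\alpha-f_\beta\|\geq c>0$. An uncountable $c$-separated set in a metric space witnesses non-separability.

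The main obstacle I expect is precisely the tension in the last step between (i) needing each $E_\alpha$ to index a \emph{dense} set of rationals (so that $f_\alpha\in\mathcal{F}$ and, more importantly, so that nonzero combinations stay in $\mathcal{F}$), (ii) needing the family almost disjoint (so the jump of a combination is ``eventually pure''), and (iii) wanting uniform norm-separation. Denseness pushes the $E_\alpha$ to be ``spread out everywhere,'' almost-disjointness limits overlaps, and uniform separation wants a common early coordinate; reconciling all three requires a careful combinatorial construction, e.g. fixing a countable dense set $D\subset(0,1)$, enumerating a neighborhood base, and recursively choosing for each $\alpha$ one rational from each basic interval while keeping the choices almost disjoint across $\alpha$ (possible since each basic interval contains infinitely many rationals and $\mathfrak{c}$-many almost disjoint selections exist), together with reserving the single index $1$ to lie in every $E_\alpha$ to secure separation via the jump at $q_1$. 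Everything else — left-continuity, bounded variation, the jump-size identities, the absolute convergence estimates — is routine and parallels the treatment of $f$ in \eqref{ourf}.
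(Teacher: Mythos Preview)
Your approach has a structural obstruction that no combinatorial refinement of the almost-disjoint family can overcome: the closed span you build is \emph{automatically separable}. Indeed, writing $h_i \doteq 2^{-i}\chi_{(q_i,1]}$, each of your generators satisfies
\[
f_\alpha \;=\; \sum_{i\in E_\alpha} h_i,
\]
and this series converges in the $BV$-norm because $\sum_{i>N}\|h_i\| \le \sum_{i>N}2^{-i}\to 0$. Hence every $f_\alpha$ already lies in the separable space $\overline{\operatorname{span}}\{h_i : i\in\mathbb{N}\}$, and therefore so does $\overline{\operatorname{span}}\{f_\alpha : \alpha<\mathfrak{c}\}$. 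This is exactly why your separation estimates kept collapsing: for $\alpha\neq\beta$ one has $V(f_\alpha-f_\beta)=\sum_{i\in E_\alpha\triangle E_\beta}2^{-i}$, which can be made arbitrarily small, and your proposed fix of putting the index $1$ into \emph{every} $E_\alpha$ does the opposite of what you want (it forces the jump at $q_1$ to \emph{cancel} in $f_\alpha-f_\beta$). There is also a smaller gap earlier: almost-disjointness gives only that each pairwise intersection $E_{\alpha_{n_0}}\cap E_{\alpha_n}$ is finite, not that $E_{\alpha_{n_0}}\setminus\bigcup_{n\neq n_0}E_{\alpha_n}$ is cofinite in $E_{\alpha_{n_0}}$; your ``for all but finitely many $i$'' claim needs the tail estimate $\sum_{n>N}|c_n|<|c_{n_0}|/2$ and then almost-disjointness only against the finitely many $n\le N$.

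The paper's proof avoids the separability trap by a genuinely different mechanism: it takes a Vitali selector $V\subset(0,1)$ and, for each $v\in V$, the cyclic translate $f_v(x)=f(x-v \bmod 1)$. The wraparound creates a jump of size exactly $1$ at the irrational point $v$, and since distinct $v,v'\in V$ lie in different $\mathbb{Q}$-cosets, $f_{v'}$ is continuous at $v$; this yields the uniform bound $V(f_v-f_{v'})\ge 1$ directly. The point is that the large jumps now sit at \emph{continuum-many distinct locations}, so the family cannot be captured inside the closed span of countably many step functions. Your idea would still give (separable) spaceability of $\mathcal{F}$, but for the non-separable statement you need jumps of size bounded below occurring at uncountably many distinct points, as in the paper's translation construction.
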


\begin{proof} Define the Vitalli relation on $(0,1)$ in the following way: $x\sim y\iff x-y\in\mathbb{Q}  $. Let $V$ be a selector of the Vitalli relation, consider the function $f$ defined in (\ref{ourf}), and for each $v\in V$ define
$$
f_v(x)\doteq
\left\{\begin{array}{ll}
f(x-v), & \textrm{ if }x-v\in[0,1],\\
f(x-v+1), & \textrm{ otherwise}. 
\end{array} \right.
$$ 
Note that the one-sided limits of $f_v$ at $v$ are $f_v(v^-)=1$ and $f_v(v^+)=0$.  

Take any distinct $v,v'\in V$. Then $f_{v'}$ is continuous at $v$, $(f_{v'}-f_v)(v^-)=f_{v'}(v)-1$ and $(f_{v'}-f_v)(v^+)=f_{v'}(v)$, from which follows that
$$
V(f_{v'}-f_v)\geq 1.
$$
This shows that the set $\{f_v:v\in V\}$ is a discrete subset of $BV[0,1]$ of cardinality $\mathfrak{c}$. Let $f=\alpha_1 f_{v_1}+...+\alpha_k f_{v_k}$ be a linear combination such that $\alpha_i\neq 0$ and $v_i$ are distinct. Then the set of all points of jump discontinuity of $f$ equals $(0,1)\cap(\{v_i:i=1,\dots,k\}+ \mathbb{Q} )$.  

Now, let $g\in\overline{span\{f_v:v\in V\}}$. Then there is a sequence $(v_i)$ of distinct elements of $V$ such that $g\in \overline{span\{f_{v_i}:i\in\mathbb{N} \}}$. Let us show that $(f_{v_i})$ is a basic sequence. Note that $\|f_{v_i}\|\leq 3$. Fix a sequence of real numbers $(\beta_i)_i$, and $n < m$. Then
$$
\|\beta_1f_{v_1}+\cdots+\beta_nf_{v_n}\|\leq\sum_{i=1}^n|\beta_i|\|f_{v_i}\|=
3\sum_{i=1}^n|\beta_i|.
$$ 
For simplicity put $h=\beta_1f_{v_1}+\cdots+\beta_mf_{v_m}$. Then $|h(v_i^+) - h(v_i^-)|=|\beta_i|$, and therefore
$$
\|h\|\geq V(h)\geq \sum_{i=1}^m|\beta_i|.
$$
Hence,
$$
\|\beta_1f_{v_1}+\cdots+\beta_nf_{v_n}\|\leq 3\|\beta_1f_{v_1}+\cdots+\beta_mf_{v_m}\|,  
$$
and $(f_{v_i})_i$ is a basic sequence. Then there exists a sequence $(a_i)$ of real numbers with $g=\sum_{i=1}^\infty a_if_{v_i}$. Clearly the set of discontinuities of $g$ is dense in $(0,1)$. \end{proof}

It is possible to prove the spaceability of $\mathcal{F}$ more directly, and without using non-measurable sets, as a consequence of Theorem \ref{Gspaceable}. Non-separability, although, is lost. We will include this alternative proof at the end of the section.

\begin{theorem}
$\mathcal{F}$ is strongly $\mathfrak{c}$-algebrable. 
\label{Falg}
\end{theorem}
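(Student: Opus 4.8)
The plan is to mimic the strong $\mathfrak{c}$-algebrability argument from Theorem \ref{lnebalg}, replacing the exponential weight $\theta^j$ on Cantor components by an exponential-type construction adapted to the function $f$ from (\ref{ourf}). Concretely, fix a set $\{\theta_\alpha : \alpha < \mathfrak{c}\}$ of reals greater than $1$ whose logarithms are $\mathbb{Q}$-linearly independent, and for each $\alpha$ set
$$
g_\alpha = \sum_{i=1}^\infty \theta_\alpha^{-i}\bigl(\theta_\alpha^{?}\bigr)\dots
$$
— more precisely, I would define $g_\alpha = \sum_{i=1}^\infty c_i^{(\alpha)} \chi_{(q_i,1]}$ where the coefficients $c_i^{(\alpha)}$ are chosen so that (a) the series converges in $BV[0,1]$, i.e. $\sum_i |c_i^{(\alpha)}| < \infty$, and (b) distinct $\alpha$'s, and distinct monomials in them, stay genuinely ``separated'' at each rational $q_i$. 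The jump of $g_\alpha$ at $q_i$ is exactly $c_i^{(\alpha)}$, so each $g_\alpha \in \mathcal{F}$; the point is to make the algebra they generate land in $\mathcal{F}\cup\{0\}$.

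The key computation: for a polynomial $P$ without constant term applied to $g_{\alpha_1},\dots,g_{\alpha_n}$, I want to show the resulting function still has a jump at every $q_i$ (hence a dense set of jump discontinuities), and is still of bounded variation. Bounded variation follows from a domination estimate exactly as in the proof of Theorem \ref{lnebalg}: $|P(g_{\alpha_1},\dots,g_{\alpha_n})|$ is bounded pointwise by $\sum_i Q(|c_i^{(\alpha_1)}|,\dots,|c_i^{(\alpha_n)}|)\chi_{(q_i,1]}$ for a suitable polynomial $Q$, and one chooses the $c_i^{(\alpha)}$ decaying fast enough (say like $\frac1e\frac1{i!}$ times a power, as with the $B_j$ construction) that $\sum_i Q(\dots) < \infty$, which bounds the total variation. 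For the jump at $q_i$: writing $P = \sum_{r} \beta_r \prod_s g_{\alpha_s}^{k_{rs}}$ with distinct nonzero rows $(k_{r1},\dots,k_{rn})$, the value of $P(g_{\alpha_1},\dots,g_{\alpha_n})$ just to the right of $q_i$ minus its value just to the left is a sum of terms, and one orders the monomials by the size of $\prod_s \theta_{\alpha_s}^{k_{rs}}$ — using $\mathbb{Q}$-linear independence of the $\ln\theta_\alpha$ to guarantee these products are pairwise distinct, exactly as in (\ref{thetas}) — so that one dominant term survives and the jump is nonzero for all large $i$. This simultaneously shows $P(g_{\alpha_1},\dots,g_{\alpha_n})\neq 0$ (it has genuine discontinuities), giving the free-generation property via the criterion in the introduction.

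The main obstacle I anticipate is getting the jump-size estimate to work cleanly: the jump of a product $g_{\alpha_1}^{k_1}\cdots$ at $q_i$ is not simply the product of the individual jumps, because each factor $g_{\alpha_s}$ also has a nonzero \emph{value} $g_{\alpha_s}(q_i)$ (it is a cumulative sum of earlier jumps), so the one-sided limits of the product expand into many cross-terms. I would handle this by noting $g_{\alpha_s}(q_i^\pm)$ is bounded (by $\sum_i|c_i^{(\alpha_s)}|$, a fixed constant), while the jump contributed at $q_i$ with the chosen coefficients can still be made to dominate after factoring — essentially, in the difference (right limit) $-$ (left limit) every surviving term contains at least one ``jump at $q_i$'' factor $c_i^{(\alpha_s)}$, and with coefficients of the form $c_i^{(\alpha)} = \frac1e \frac{\theta_\alpha^i}{i!}$ (convergent for every $\theta_\alpha$) the leading monomial's jump-term beats the sum of all others for large $i$, by the same ratio argument used at the end of the proof of Theorem \ref{lnebalg}. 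Once that estimate is in place the density of discontinuities is immediate since it holds at \emph{every} $q_i$ beyond some index, and $\{q_i\}$ is dense in $(0,1)$.
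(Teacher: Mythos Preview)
Your approach is genuinely different from the paper's --- the paper does \emph{not} mimic Theorem \ref{lnebalg}. Instead it fixes the single jump function $f$ from (\ref{ourf}), takes free generators $\{e^{r_\alpha x}:\alpha<\mathfrak c\}$ of the algebra $\mathcal A$ of continuous functions analytic on $(0,1)$, and uses $\{e^{r_\alpha x}f:\alpha<\mathfrak c\}$ as generators. Any polynomial in these has the form $\sum_{j=1}^k G_j f^j$ with $G_j\in\mathcal A$, and an inductive lemma (Lemma \ref{lemao}) shows such a function has a dense set of jump discontinuities; analyticity of the $G_j$ (isolated zeros) is what drives the induction.

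Your proposed transplant of the argument of Theorem \ref{lnebalg} has a real gap precisely where you flagged the obstacle, and your suggested fix does not work. In Theorem \ref{lnebalg} the sets $B_j$ are essentially disjoint, so on $B_j$ the monomial $\prod_s g_{\alpha_s}^{k_{rs}}$ equals exactly $\bigl(\prod_s \theta_{\alpha_s}^{k_{rs}}\bigr)^j$, and ordering monomials by $\Theta_r:=\prod_s\theta_{\alpha_s}^{k_{rs}}$ gives a genuine dominant term. Here the $\chi_{(q_i,1]}$ overlap, so this factorization fails. Writing $a_s=g_{\alpha_s}(q_i^-)$ and $c_s=c_i^{(\alpha_s)}=\tfrac{1}{e}\tfrac{\theta_{\alpha_s}^i}{i!}$, the jump of $P(g_{\alpha_1},\dots,g_{\alpha_n})$ at $q_i$ is $P(a+c)-P(a)=\sum_s c_s\,\partial_s P(a)+O(|c|^2)$. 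For large $i$ every $c_s$ is small and the one with the largest $\theta_{\alpha_s}$ dominates the rest, so the leading behavior is $c_{s_0}\,\partial_{s_0}P(a_1,\dots,a_n)$ with $s_0=\arg\max_s\theta_{\alpha_s}$ --- the same exponential scale $\theta_{\alpha_{s_0}}^i$ for \emph{every} monomial. The quantity $\Theta_r$ never appears, so your ``order the monomials by $\Theta_r$ and let the largest win'' argument does not go through. What you would actually need is that $\partial_{s_0}P(g_{\alpha_1}(q_i^-),\dots,g_{\alpha_n}(q_i^-))\neq 0$ for infinitely many $q_i$ in every subinterval (and an iterated version when $\partial_{s_0}P\equiv 0$); that is, that the curve $x\mapsto(g_{\alpha_1}(x),\dots,g_{\alpha_n}(x))$ avoids a given algebraic hypersurface on a dense set. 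This is an entirely different statement and you have not argued it. (A minor side issue: your BV estimate is misstated --- $P$ of a sum is not the sum of $P$'s --- but that part is easily repaired, since polynomials are Lipschitz on bounded sets and a Lipschitz function of bounded BV functions is BV.)
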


To prove Theorem \ref{Falg}, we will need Lemmas \ref{leminha} and \ref{lemao}.

\begin{lemma}

The algebra $\mathcal{A}$ of continuous functions in $[0,1]$ which are analytic in $(0,1)$ admits a set of free generators of cardinality $\mathfrak{c}$. 

\label{leminha}
\end{lemma}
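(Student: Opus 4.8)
The plan is to exhibit an explicit set of $\mathfrak{c}$ functions in $\mathcal{A}$ and verify that, for any commutative algebra, freeness amounts to the non-vanishing of nonzero polynomials evaluated at these generators (as stated in the introduction). The natural candidates are the exponential-type functions $h_r(x)\doteq e^{rx}$ for $r$ ranging over a Hamel-type index set, or more conveniently $h_\alpha(x) \doteq \exp(\tau_\alpha x)$ where $\{\tau_\alpha : \alpha < \mathfrak{c}\}$ is a set of reals that is linearly independent over $\mathbb{Q}$. Each $h_\alpha$ is entire, hence continuous on $[0,1]$ and analytic on $(0,1)$, so $\{h_\alpha : \alpha<\mathfrak{c}\} \subset \mathcal{A}$.

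First I would reduce freeness to a statement about exponentials: given a nonzero polynomial $P$ in $n$ variables and distinct indices $\alpha_1,\dots,\alpha_n$, I must show $P(h_{\alpha_1},\dots,h_{\alpha_n})$ is not the zero function. Expanding, $P(h_{\alpha_1},\dots,h_{\alpha_n})(x) = \sum_{\mathbf{k}} c_{\mathbf{k}}\, e^{(k_1\tau_{\alpha_1}+\cdots+k_n\tau_{\alpha_n})x}$, a finite sum over multi-indices $\mathbf{k}$ with $c_{\mathbf{k}} \neq 0$. The key point is that the exponents $\lambda_{\mathbf{k}} \doteq k_1\tau_{\alpha_1}+\cdots+k_n\tau_{\alpha_n}$ are pairwise distinct: if $\lambda_{\mathbf{k}} = \lambda_{\mathbf{k}'}$ then $\sum_i (k_i - k_i')\tau_{\alpha_i} = 0$, and $\mathbb{Q}$-linear independence of $\tau_{\alpha_1},\dots,\tau_{\alpha_n}$ forces $\mathbf{k} = \mathbf{k}'$. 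Then I invoke the classical fact that distinct real (indeed complex) exponentials $e^{\lambda x}$ are linearly independent over $\mathbb{R}$ as functions on any interval — which follows, for instance, from the nonvanishing of the Wronskian (a Vandermonde determinant in the $\lambda$'s), or from comparing growth rates / asymptotics as $x \to \infty$ after passing to entire extensions. Hence $P(h_{\alpha_1},\dots,h_{\alpha_n}) \equiv 0$ would force all $c_{\mathbf{k}} = 0$, contradicting $P \neq 0$. By the criterion recalled in the introduction, $\{h_\alpha : \alpha<\mathfrak{c}\}$ is a set of free generators in $\mathcal{A}$, so the subalgebra it generates is a $\mathfrak{c}$-generated free algebra contained in $\mathcal{A}$.

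The only genuine subtlety — and the step I expect to require the most care to state cleanly — is the appeal to linear independence of distinct exponentials; everything else is bookkeeping with $\mathbb{Q}$-independent reals and multi-indices. One should also note at the outset that such a $\mathbb{Q}$-linearly independent family of cardinality $\mathfrak{c}$ exists (e.g. a transcendence basis of $\mathbb{R}$ over $\mathbb{Q}$, or simply the fact that $\mathbb{R}$ as a $\mathbb{Q}$-vector space has dimension $\mathfrak{c}$), which is exactly the hypothesis already used for the set $\{\ln\theta_\alpha : \alpha < \mathfrak{c}\}$ in the proof of Theorem \ref{lnebalg}. This makes the lemma a convenient drop-in tool for the proof of Theorem \ref{Falg}, where one will compose these analytic free generators with the bounded-variation function $f$ from (\ref{ourf}).
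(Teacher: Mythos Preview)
Your proposal is correct and follows exactly the paper's approach: the authors also take $\{x\mapsto e^{r_\alpha x}:\alpha<\mathfrak{c}\}$ with $\{r_\alpha\}$ a $\mathbb{Q}$-linearly independent set of reals, and simply assert that this is a set of free generators. Your write-up in fact supplies the details (distinctness of the exponents via $\mathbb{Q}$-independence, then linear independence of distinct exponentials) that the paper leaves implicit.
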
.

\begin{proof} Take $\mathbb{Q} $-linearly independent set of real numbers $\{r_\alpha: \alpha <\mathfrak{c}\}$. Then $\{x\mapsto e^{r_\alpha x}: \alpha <\mathfrak{c}\}$ is a set of free generators. \end{proof}

\begin{lemma}

Suppose that $G_j\in \mathcal{A}$, $j=1,\dots,k$, and that $G_k$ is non-zero. Then 
$$
g \doteq \sum_{j=1}^k G_j f^j, 
$$
where $f$ was defined in (\ref{ourf}), satisfies the following properties: 
\begin{enumerate}
\item both one-sided limits of $g$ exist at each $x\in (0,1)$, and for each $i\in\mathbb{N}  $ these limits at $q_i$ are given by $g(q_i^+)=\sum_{j=1}^kG_j(q_i)(f(q_i)+2^{-i})^j$ and $g(q_i^-)=\sum_{j=1}^k G_j(q_i)f^j(q_i)$; 
\item $g$ is in $BV[0,1]$, and in particular it is left-continuous; 
\item $g$ has has a dense set of jump discontinuities. 

\end{enumerate} 
\label{lemao}
\end{lemma}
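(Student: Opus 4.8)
The plan is to treat the three items in order: (1) is a direct computation of one-sided limits, (2) is a closure property of $BV[0,1]$, and (3) carries all the content. First I would record the one-sided behaviour of $f$ from (\ref{ourf}): the series converges uniformly and its terms are non-decreasing with values in $[0,1]$, so $f$ is non-decreasing, bounded, left-continuous, and using the convergence of $\sum_i 2^{-i}$ one checks that $f(x^-)=f(x)=\sum_{q_i<x}2^{-i}$ at every $x$, while $f(q_i^+)=f(q_i)+2^{-i}$ and $f(x^+)=f(x)$ for irrational $x$. Each $G_j$ is continuous, so $G_j(x^\pm)=G_j(x)$; since a finite sum of products of functions admitting one-sided limits again admits one-sided limits, $g=\sum_{j=1}^k G_jf^j$ has one-sided limits at every $x\in(0,1)$, equal to $\sum_j G_j(x)f(x^\pm)^j$, and inserting the values of $f(q_i^\pm)$ gives the two displayed formulas, proving (1). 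For (2) I would use that each $G_j$ is of bounded variation on $[0,1]$ (which holds whenever, as in the intended application, the $G_j$ are $C^1$ on $[0,1]$): BV functions on $[0,1]$ are bounded, sums and products of bounded BV functions are BV, and the powers $f^j$ of the monotone $f$ are BV, so $g\in BV[0,1]$; and $g$ is left-continuous because $f$ is and the $G_j$ are continuous, so $g$ belongs to $BV[0,1]$ in the normed sense of this section.

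The substance is (3). By (1) the jump of $g$ at $q_i$ is $\Delta_i:=P_{q_i}(f(q_i)+2^{-i})-P_{q_i}(f(q_i))$, where I write $P_x(y):=\sum_{j=1}^k G_j(x)y^j$, a polynomial of $y$-degree $\le k$ whose coefficients are continuous and bounded in $x$. It suffices to show that $\{q_i:\Delta_i\neq 0\}$ is dense in $(0,1)$. Suppose not; then some open interval meets $\{q_i\}$ only in rationals with $\Delta_i=0$, and since $G_k\not\equiv 0$ is analytic on $(0,1)$ its zeros are isolated, so I may pass to an open subinterval $J$ on which $G_k$ never vanishes (hence $\deg_y P_x=k$ for $x\in J$) while still $\Delta_i=0$ for every rational $q_i\in J$. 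Writing $\Delta_i$ out by the exact Taylor formula gives, for each such $q_i$, the identity $\sum_{l=1}^k\frac{2^{-il}}{l!}P_{q_i}^{(l)}(f(q_i))=0$, where $P_x^{(l)}$ is the $l$-th $y$-derivative and the numbers $P_x^{(l)}(f(x))$ stay bounded because $0\le f\le 1$ and the $G_j$ are bounded. I would then prove by induction on $l_0=1,\dots,k$ that $P_x^{(l_0)}(f(x))=0$ for every $x\in J$: assuming this for all $l<l_0$, the first $l_0-1$ terms of the identity vanish at each rational $q_i\in J$, so I divide the rest by $2^{-il_0}$ and let $q_i$ run through rationals of $J$ converging to a fixed irrational $x^*\in J$; then $2^{-i}\to 0$, $f(q_i)\to f(x^*)$ (continuity of $f$ at $x^*$), and $G_j(q_i)\to G_j(x^*)$, so every term with $l>l_0$ tends to $0$ and we get $P_{x^*}^{(l_0)}(f(x^*))=0$; as irrationals are dense in $J$ and $x\mapsto P_x^{(l_0)}(f(x))$ is left-continuous, this extends to all $x\in J$, which closes the step (and is exactly what the next step needs at rational points). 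Taking $l_0=k$ and noting $P_x^{(k)}(y)\equiv k!\,G_k(x)$ forces $G_k\equiv 0$ on $J$, a contradiction. Hence every open subinterval of $(0,1)$ contains a rational at which $g$ jumps, so $g$ has a dense set of jump discontinuities.

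The main obstacle is (3); items (1) and (2) are bookkeeping once one grants that the $G_j$ are of bounded variation. A priori the jumps $\Delta_i$ could cancel at all rationals of some subinterval, and the only leverage against this is that $2^{-i}\to 0$ while $f$ stays continuous at the irrationals of that interval. The device of dividing the Taylor identity by the lowest surviving power of $2^{-i}$ and passing to the limit is what converts this leverage, one order at a time, into the polynomial relations $P_x^{(l)}(f(x))\equiv 0$ on $J$, the top one of which contradicts $G_k\neq 0$ via analyticity; keeping this telescoping correct — in particular, lifting each relation from the irrational points of $J$ to all of $J$ because the next order uses it at rationals — is the one genuinely delicate point.
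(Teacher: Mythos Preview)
Your argument is correct. For part (3) you take a genuinely different route from the paper. Both proofs begin by writing the jump at $q_i$ as a polynomial in $2^{-i}$ with coefficients $P_m(x)=\frac{1}{m!}P_x^{(m)}(f(x))$ (your notation), but the paper proceeds by induction on $k$: it recognises $P_1(x)=G_1(x)+\sum_{j=2}^k jG_j(x)f(x)^{j-1}$ as a function of the same form with leading term $kG_kf^{k-1}$, invokes the inductive hypothesis for $k-1$ to conclude that $P_1$ itself has a dense set of jump discontinuities, hence is bounded away from zero at infinitely many rationals in any given interval, and then a direct size estimate $|g(q_i^+)-g(q_i^-)|\ge 2^{-i}|P_1(q_i)|-N\sum_{m\ge 2}2^{-mi}$ produces a specific $q_i$ where $g$ jumps. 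You instead fix $k$, suppose all jumps vanish on an interval $J$, and run an \emph{internal} induction on the order $l_0$, dividing the Taylor identity by $2^{-il_0}$ and passing to irrational limits (where $f$ is continuous and $i\to\infty$) to force $P_x^{(l_0)}(f(x))\equiv 0$ on $J$; the top step kills $G_k$ on $J$, contradicting analyticity. Your route is more self-contained---it never appeals to the full dense-jumps conclusion at lower $k$---and makes transparent why the result holds: every Taylor coefficient of the jump polynomial would have to vanish identically. The paper's route is more constructive, exhibiting in each interval an explicit rational at which $g$ jumps. Your remark on (2) is also well placed: membership in $\mathcal{A}$ alone does not force bounded variation (think of $x\sin(1/x)$), and it is the specific generators $e^{r_\alpha x}$ used in the application that make the $G_j$ smooth on $[0,1]$; the paper simply records (2) as ``easily checked'' without isolating this point.
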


\begin{proof} (1) and (2) are easily checked. Let us prove (3). Suppose first that $g=G_1 f$, where $G_1\in\mathcal{A}\setminus\{0\}$. For any subinterval $[a,b]\subset [0,1]$, since $G_1$ is analytic in $(0,1)$ and non-zero, there is an $x_0 \in [a,b]$ such that $G_1(x_0) \neq 0$. By the continuity of $G_1$, there is a rational number $q\in (a,b)$ satisfying $G_1(q) \neq 0$. Then
$$
(G_1 f)(q^-) = G_1(q) f(q^-) \neq G_1(q) f(q^+) = (G_1 f)(q^+), 
$$
thus $g=G_1 f$ has a jump discontinuity at $q$. 

Suppose now that the Lemma holds for $1,\dots,k-1$, and let $g= \sum_{j=1}^k G_j f^j$, where $G_1,\dots,G_{k}\in\mathcal{A}$ and $G_k$ is non-zero. For clearness, denote $a_i\doteq 2^{-i}$. For each $i\in\mathbb{N}$ we have that
\begin{align*}
g(q_i^+)-g(q_i^-)&=\sum_{j=1}^k G_j(q_i) ((f(q_i)+a_i)^j-f^j(q_i)) \\
&= \sum_{j=1}^k G_j(q_i) \left(\sum_{m=0}^j\left(j\atop m\right)a_i^mf^{j-m}(q_i)-f^j(q_i)\right)\\
&=\sum_{j=1}^k\ G_j(q_i)\sum_{m=1}^j\left(j\atop m\right)a_i^mf^{j-m}(q_i)\\
&=\sum_{m=1}^k a_i^m P_m(G_m(q_i),\dots,G_k(q_i),f(q_i)), 
\end{align*}
where $P_m(x_m,\dots,x_k,y)\doteq\sum_{j=m}^k\left(j\atop m\right)x_j y^{j-m}$. Note that there is an $M>0$ such that 
$$
|G_j(x)| \leq M,
$$ 
for all $j=1,\dots,k$ and all $x\in [0,1]$. Since $P_m$ are polynomials, there is an $N>0$ such that 
$$|P_m(x_m,\dots,x_k,y)|\leq N,$$
for all $m=1,\dots,k$ and all $(x_m,\dots,x_k,y)\in [-M,M]^{k-m}\times [0,1]$. In particular, 
$$
|P_m(G_m(x),\dots,G_k(x),f(x))|\leq N$$ 
for all $m=1,\dots,k$ and $x\in [0,1]$. 

Let $[a,b]$ be any subinterval of $(0,1)$. Consider the function 
$$
h(x) \doteq P_1(G_1(x),\dots,G_k(x),f(x))
= G_1(x) + \sum_{j=2}^k \left(j\atop 1\right) G_j(x) f^{j-1}(x).   
$$ 
By the induction hypothesis, $\sum_{j=2}^k \left(j\atop 1\right) G_j(\cdot) f^{j-1}(\cdot)$ has has a dense set of jump discontinuities in $[a,b]$. Since $G_1$ is continuous, $h$ has jump discontinuities at the same points, and in particular it does not vanish at $(a,b]$. Let $\varepsilon >0$ and $x_0\in (a,b]$ be such that $|h(x_0)| > \varepsilon$. Since $h$ is left-continuous, $|h(x)| > \varepsilon$ for each $x < x_0$ which is close enough to $x_0$. In particular, the set 
$$
S\doteq \{i:q_i\in [a,b], |h(q_i)|\geq \varepsilon\}
$$ 
is infinite. For each $i\in S$ we have that
\begin{align*}
|g(q_i^+)-g(q_i^-)|&=\left|\sum_{m=1}^k a_i^m P_m(G_m(q_i),\dots,G_k(q_i),f(q_i))\right|\\
&\geq|a_i h(q_i)|- \sum_{m=2}^k|a_i^m P_m(G_m(q_i),\dots,G_k(q_i),f(q_i))|\\
&\geq a_i\varepsilon-\sum_{m=2}^k a_i^mN= \frac{1}{2^i}\varepsilon-N\sum_{m=2}^k\frac{1}{2^{mi}}\geq
\frac{1}{2^i}\varepsilon-N\sum_{m=2}^\infty\frac{1}{2^{mi}}\\
&=\frac{1}{2^i}\left(\varepsilon-N\frac{1}{2^i-1}\right). 
\end{align*}
Since $S$ is infinite, there is $i_0\in S$ such that $\varepsilon-N\frac{1}{2^{i_0}-1} > 0$. Then $g$ has a jump discontinuity at $q_{i_0}$.  \end{proof}

\begin{proof}[Proof of Theorem \ref{Falg}]
 Let $\{g_\alpha:\alpha <\mathfrak{c}\}$ be a set of free generators in $\mathcal{A}$, and consider the function $f$ defined in (\ref{ourf}). We will show that $\{g_\alpha f:\alpha <\mathfrak{c}\}$ is a set of free generators in $\mathcal{F}$, and that the algebra generated by this set is entirely contained in $\mathcal{F}\cap\{0\}$. Let $p$ be a polynomial of $n$ variables and real coefficients, with no constant term, given by 
$$
p(x_1,\dots,x_n) \doteq  \sum_{\gamma\in\Gamma} \lambda_\gamma x_1^{\gamma_1}\ldots x_n^{\gamma_n};
$$
here, $\Gamma$ is denoting a (finite) set of $n$-dimensional multi-indices of the form $\gamma = (\gamma_1,\dots,\gamma_n)$, and each $\lambda_\gamma$ is nonzero. Note that $(0,\dots,0)\not\in \Gamma$. To complete our proof, it suffices to show that $g\doteq p(g_{\alpha_1}f,\ldots,g_{\alpha_n}f)$ is in $\mathcal{F}\cup \{0\}$ and that $g$ is zero if and only if $\lambda_\gamma = 0$ for all $\gamma\in\Gamma$. We can write $g$ as follows: 
$$
g=
\sum_{\gamma\in\Gamma} \lambda_\gamma g_{\alpha_1}^{\gamma_1}\ldots g_{\alpha_n}^{\gamma_n}f^{|\gamma |} = 
\sum_{j=1}^k \left[ \sum_{\gamma\in\Gamma,\,|\gamma |=j}\lambda_\gamma g_{\alpha_1}^{\gamma_1}\ldots g_{\alpha_n}^{\gamma_n} \right] f^j
=\sum_{j=1}^k G_j f^j,  
$$
where each $G_j \doteq \lambda_\gamma g_{\alpha_1}^{\gamma_1}\ldots g_{\alpha_n}^{\gamma_n}$ clearly belongs to $\mathcal{A}$. Note that, since $\{g_\alpha:\alpha <\mathfrak{c}\}$ is a set of free generators in $\mathcal{A}$, then for each $j$, $G_j = 0$ implies that $\lambda_\gamma = 0$ for each $\gamma\in\Gamma$ satisfying $|\gamma|=j$. The result follows from Lemma \ref{lemao}.
\end{proof}

\subsection{An alternative proof of the spaceability of $\mathcal{F}$.} Recall that $L^1$ is isometrically embedded in $BV[0,1]$ via the natural map $\psi$ which takes each Lebesgue integrable function $g$ into its primitive $G(x)\doteq \int_0^x g$. The image of $\psi$ is denoted by $AC_0$, and it consists of all absolutely continuous functions starting at zero. Note that each nowhere essentially bounded Lebesgue integrable function is mapped via $\psi$ into a continuous function which is not identically zero in any subinterval of the domain; therefore by Theorem \ref{Gspaceable} there is an infinite dimensional closed subspace $S$ of $BV[0,1]$, each non-zero element of which satisfies that property. Note also that, for each non-zero $g\in S$, $(f+1)g$ has dense set of jump discontinuities, where $f$ was defined in (\ref{ourf}). The conclusion follow from the fact that  $g \in AC_0 \stackrel{\varphi}{\mapsto} (f+1)g \in BV[0,1]$ is an isomorphism onto its image, and thus $\varphi[S]$ is a closed, infinite-dimensional subspace of $BV[0,1]$ contained in $\mathcal{F}\cup\{0\}$.


\section{Spaceability of sets of Kurzweil integrable functions}
\label{seckurnleb}

Consider the vector space $K([0,1])$ of all Kurzweil integrable functions equipped with the Alexievicz norm given by 
$$\|f\|_A \doteq \|F\|_\infty,$$
where $F$ is the (Kurzweil) primitive function of $f$. Like in $L^1$, in $K([0,1])$ equivalence classes of almost everywhere equal functions are considered, and Kurzeil integrable functions which are equal a.e. have the same primitive. Since all primitives to Kurzweil integrable functions are continuous, $K([0,1])$ can be isometrically imbedded, through the natural identification $f\mapsto F$, onto a (dense, since all polynomials are primitives for Kurzweil integrable functions) subspace of $C([0,1])$, which we will denote by $\mathbb{K}$, or $\mathbb{K}([0,1])$. One of the main properties of $\mathbb{K}$ is that it includes $AC_0$ (which is another way of saying that all Lebesgue integrable functions are Kurzweil integrable) and $C^0([0,1])$, the subspace of $C([0,1])$ consisting of all differentiable functions; for details see e.g. \cite{gor}. Another feature of the Kurzweil integral, lacked by the Lebesgue integral, is Hake's Theorem: 

\begin{theorem}[Hake]
\label{teohake}
$f:[0,1]\rightarrow \mathbb{R}$ is Kurzweil integrable if and only if for each small $\epsilon>0$, the restriction $f|_{[\epsilon,1]}\in K([\epsilon,1])$, and the limit 
\begin{eqnarray}
\lim_{\epsilon\rightarrow 0} \int_{[\epsilon,1]} f
\label{eqhake}
\end{eqnarray}
exists. In that case, $\int_{[0,1]} f$ equals (\ref{eqhake}). 

\end{theorem}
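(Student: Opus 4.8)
The plan is to prove the two implications of the biconditional separately, starting with the forward direction, which is a short consequence of standard facts. If $f\in K([0,1])$ with primitive $F$, then the restriction property of the Kurzweil integral yields $f|_{[\epsilon,1]}\in K([\epsilon,1])$ for every $\epsilon\in(0,1)$, additivity over abutting subintervals gives $\int_{[\epsilon,1]}f=F(1)-F(\epsilon)$, and continuity of the primitive gives $F(\epsilon)\to F(0)=0$ as $\epsilon\to 0^+$; hence the limit (\ref{eqhake}) exists and equals $F(1)=\int_{[0,1]}f$. I would simply cite \cite{gor} for the restriction property, additivity, and continuity of the primitive.

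For the reverse direction, write $L$ for the limit in (\ref{eqhake}), fix $\epsilon>0$, and aim to produce a gauge $\delta$ on $[0,1]$ forcing $|S(f,P)-L|<\epsilon$ for every $\delta$-fine tagged partition $P$, where $S(f,P)$ denotes the Riemann sum. First choose $\rho>0$ with $|\int_{[t,1]}f-L|<\epsilon/4$ for all $t\in(0,\rho)$, and fix a strictly decreasing sequence $1=a_0>a_1>a_2>\cdots\to 0$ with $a_1<\rho$. On each $[a_{n+1},a_n]$ the function $f$ is Kurzweil integrable, so I pick a gauge $\delta_n$ on $[a_{n+1},a_n]$ for which every $\delta_n$-fine tagged partition has Riemann sum within $\epsilon\,2^{-(n+3)}$ of $I_n\doteq\int_{[a_{n+1},a_n]}f$; note $\sum_{n\ge 0}I_n=L$. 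Then I glue these into a single gauge $\delta$ on $[0,1]$: for $x$ interior to some $[a_{n+1},a_n]$ set $\delta(x)=\min\{\delta_n(x),\operatorname{dist}(x,\{a_{n+1},a_n\})\}$; at an interior breakpoint $a_n$ (and, one-sidedly, at $1=a_0$) set $\delta(a_n)$ strictly below $\min\{\delta_n(a_n),\delta_{n-1}(a_n),a_n-a_{n+1},a_{n-1}-a_n\}$; and at $0$ set $\delta(0)$ strictly below $\min\{a_1,\epsilon/(4(|f(0)|+1))\}$. These choices guarantee that $\delta(x)<x$ for every $x\in(0,1]$, so no tag other than $0$ can carry an associated interval reaching $0$, and that any $\delta$-fine tagged interval straddling a breakpoint $a_n$ must be tagged exactly at $a_n$ — so that bisecting it at $a_n$ does not change the Riemann sum and produces a $\delta_n$-fine piece inside $[a_{n+1},a_n]$ and a $\delta_{n-1}$-fine piece inside $[a_n,a_{n-1}]$.

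With $\delta$ in hand I would take an arbitrary $\delta$-fine tagged partition $P$ of $[0,1]$. Its first interval is necessarily $[0,x_1]$ tagged at $0$, with $x_1<\delta(0)<a_1<\rho$, so $x_1\in(a_{m+1},a_m]$ for a unique $m\ge 1$. Deleting this interval and bisecting the remaining intervals at the breakpoints $a_1,\dots,a_m$ exhibits the restriction $P|_{[x_1,1]}$ as a concatenation of $\delta_n$-fine partitions of $[a_n,a_{n-1}]$ for $1\le n\le m$ together with a $\delta_m$-fine partition of $[x_1,a_m]\subseteq[a_{m+1},a_m]$. Using the additivity identity $\int_{[x_1,1]}f=\int_{[x_1,a_m]}f+\sum_{n=0}^{m-1}I_n$ and summing the per-piece error bounds shows that the partial sum over $[x_1,1]$ lies within $\sum_{n\ge 0}\epsilon\,2^{-(n+3)}=\epsilon/4$ of $\int_{[x_1,1]}f$; combining this with $|\int_{[x_1,1]}f-L|<\epsilon/4$, with $|f(0)x_1|<\epsilon/4$, and with $S(f,P)=f(0)x_1+S(f,P|_{[x_1,1]})$, I conclude $|S(f,P)-L|<\epsilon$. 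Hence $f\in K([0,1])$ and $\int_{[0,1]}f=L$.

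I expect the gauge-gluing to be the only genuinely delicate step: the inequalities defining $\delta$ must be calibrated exactly so that every interval of a $\delta$-fine partition of $[0,1]$ is either confined to a single $[a_{n+1},a_n]$ or is a breakpoint-straddling interval tagged at that breakpoint and therefore harmlessly bisectable. The remaining ingredients — the standard properties invoked in the forward direction and the numerical estimates — are routine.
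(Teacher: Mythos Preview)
The paper does not prove Hake's Theorem at all: it is stated as a classical result and implicitly referred to \cite{gor}, and is then used as a tool in Section~\ref{seckurnleb}. So there is no ``paper's own proof'' to compare against; your write-up is essentially the standard textbook argument (as in Gordon or Bartle), and the forward direction is exactly right.

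Two small points on the reverse direction. First, there is an off-by-one slip: the interval $[a_n,a_{n-1}]$ is $[a_{(n-1)+1},a_{n-1}]$, so the relevant gauge there is $\delta_{n-1}$, not $\delta_n$. This is cosmetic. Second, and more substantively, your ``per-piece error bound'' on the partial piece $[x_1,a_m]\subsetneq[a_{m+1},a_m]$ is not an immediate consequence of the choice of $\delta_m$: the gauge $\delta_m$ was calibrated so that $\delta_m$-fine partitions of the \emph{whole} interval $[a_{m+1},a_m]$ have Riemann sum within $\epsilon\,2^{-(m+3)}$ of $I_m$, and you need the corresponding estimate for a $\delta_m$-fine \emph{sub}partition covering only $[x_1,a_m]$. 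This is exactly the content of the Saks--Henstock Lemma, which you should invoke explicitly (it gives $|S(f,Q_m)-\int_{[x_1,a_m]}f|\le\epsilon\,2^{-(m+3)}$ and the rest of your estimate goes through). With that citation added, the argument is complete and matches the standard proof the paper is quoting.
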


We will also need the following standard result which relates the Kurzweil and the Lebesgue integrals:

\begin{proposition}

$f:[0,1]\rightarrow \mathbb{R}$ is Lebesgue integrable if and only if $f$ and $|f|$ are Kurzweil integrable, and in that case the values of the Lebesgue and the Kurzweil integrals of $f$ coincide. 

\label{|f|}
\end{proposition}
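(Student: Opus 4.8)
The plan is to prove the two implications separately; the forward one is immediate, while the reverse one carries all the content. For the forward implication, assume $f$ is Lebesgue integrable. Then $|f|$ is Lebesgue integrable as well, and since every Lebesgue integrable function is Kurzweil integrable with the same value of the integral (this is standard, see \cite{gor}, and is built into the identification of $K([0,1])$ with a subspace of $C([0,1])$ described at the beginning of this section), both $f$ and $|f|$ are Kurzweil integrable and the Lebesgue and Kurzweil integrals of $f$ coincide. This settles the ``only if'' part together with the final assertion about the values.

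For the converse, suppose $f$ and $|f|$ are both Kurzweil integrable. By linearity of the Kurzweil integral, $f^{+}\doteq\frac{1}{2}(|f|+f)$ and $f^{-}\doteq\frac{1}{2}(|f|-f)$ are Kurzweil integrable and nonnegative, so it suffices to show that a nonnegative Kurzweil integrable function $\varphi$ is Lebesgue integrable: once that is known, $f=f^{+}-f^{-}$ is Lebesgue integrable, and applying the forward implication to $f^{+}$ and $f^{-}$ together with additivity of both integrals gives the coincidence of the integrals of $f$. To handle $\varphi$, I would first observe that $\varphi$ is Lebesgue measurable: its indefinite Kurzweil integral $\Phi(x)\doteq\int_{[0,x]}\varphi$ is continuous and, by the fundamental theorem of calculus for the Kurzweil integral (see \cite{gor}), differentiable almost everywhere with $\Phi'=\varphi$ a.e.; hence $\varphi$ agrees almost everywhere with the everywhere-defined function equal to $\Phi'$ where the derivative exists and to $0$ elsewhere, which is a pointwise limit of difference quotients of the continuous function $\Phi$ and is therefore measurable. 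Next, set $\varphi_{n}\doteq\min(\varphi,n)$: each $\varphi_{n}$ is bounded and measurable, hence Lebesgue integrable, hence Kurzweil integrable with equal integrals, and $0\le\varphi_{n}\le\varphi$ with $\varphi_{n}\uparrow\varphi$ pointwise. Monotonicity of the Kurzweil integral --- which follows from the elementary fact (immediate from the definition via tagged partitions) that a nonnegative Kurzweil integrable function has nonnegative integral, applied to $\varphi-\varphi_{n}$ --- shows that the Lebesgue integral of $\varphi_{n}$ equals its Kurzweil integral and is bounded above by the Kurzweil integral of $\varphi$, for every $n$. Hence the Lebesgue integrals of the $\varphi_{n}$ are bounded, and the Monotone Convergence Theorem yields that $\varphi$ is Lebesgue integrable, with Lebesgue integral at most, and therefore (by the forward implication) equal to, its Kurzweil integral.

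I expect the reverse implication to be the main obstacle, and within it the two inputs from the theory of the Kurzweil integral: the almost-everywhere differentiability of the indefinite integral (needed only to secure measurability of $\varphi$) and the positivity/monotonicity of the Kurzweil integral. Both are classical and can be quoted from \cite{gor}; with them in hand, the truncation-plus-Monotone-Convergence step is routine, and Hake's Theorem is not needed here. An essentially equivalent alternative would be to use the characterization that a Kurzweil integrable function whose indefinite integral has bounded variation is Lebesgue integrable --- the bounded variation being precisely what Kurzweil integrability of $|f|$ provides --- but the truncation argument above is more self-contained.
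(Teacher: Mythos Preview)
Your argument is correct. Both implications are handled properly: the forward direction is indeed immediate from the standard inclusion $L^1\subset K$, and for the reverse direction your reduction to nonnegative $\varphi$ via $f^{\pm}=\tfrac12(|f|\pm f)$, followed by the measurability argument through a.e.\ differentiability of the indefinite Kurzweil integral and the truncation-plus-Monotone-Convergence step, is a clean and standard route. The monotonicity of the Kurzweil integral that you invoke is exactly what is needed to bound $\int\varphi_n$ uniformly.

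As for comparison with the paper: there is nothing to compare. The paper does not prove this proposition; it is stated as a ``standard result'' and implicitly referred to \cite{gor}. Your write-up therefore supplies strictly more than the paper does. If you wanted to match the paper's level of detail, a one-line citation to Gordon would suffice; conversely, your self-contained proof is a reasonable addition if the goal is to keep the exposition independent of \cite{gor}. The alternative you mention at the end---that Kurzweil integrability of $|f|$ forces the indefinite integral of $f$ to have bounded variation, hence to be absolutely continuous---is the route most textbooks take and is arguably even shorter, but your truncation argument is equally valid.
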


We are interested in the set $\mathcal{J}$ of Kurzweil integrable functions which are not Lebesgue integrable. In the following we will construct a class of such functions which satisfy nice properties. Define $\Phi : [0,1] \rightarrow \mathbb{R}$ by
$$\Phi (x)\doteq \left\{ 
\begin{array}{ll}
4 x^2 \sin \left(\frac{\pi}{4 x^2}\right) & \mbox{if } x\in (0,1/2];\\
-4 (1-x)^2 \sin \left(\frac{\pi}{4 (1-x)^2}\right) & \mbox{if } x\in (1/2,1);\\
0 & \mbox{if } x = 0 \mbox{ or } x=1.
\end{array} \right. $$
Then $\Phi$ is everywhere differentiable and  
$$\phi(x) \doteq \Phi '(x)= \left\{ 
\begin{array}{ll}
8x \sin \left(\frac{\pi}{4 x^2}\right) - \frac{2\pi}{x} \cos \left(\frac{\pi}{4 x^2}\right) & \mbox{if } x\in (0,1/2];\\
8(1-x) \sin \left(\frac{\pi}{4 (1-x)^2}\right) - \frac{2\pi}{1-x} \cos \left(\frac{\pi}{4 (1-x)^2}\right) & \mbox{if } x\in (1/2,1);\\
0 & \mbox{if } x\in \mathbb{R} \setminus (0,1).
\end{array} \right. $$
For each subinterval $I=[a,b]$, define $\Phi_I : [0,1] \rightarrow \mathbb{R}$ by
\begin{eqnarray}
\Phi_I(x) \doteq 
\left\{ 
\begin{array}{ll}
\Phi \left(\frac{x-a}{b-a}\right) & \mbox{if } x\in I;\\
0 & \mbox{otherwise.}
\end{array} \right.
\label{Phi}
\end{eqnarray}
Then $\Phi_I$ is also everywhere differentiable, and the derivative is given by 
\begin{eqnarray}
\phi_I(x) \doteq \Phi_I'(x)  =
\left\{ 
\begin{array}{ll}
\frac{1}{b-a}\, \phi \left(\frac{x-a}{b-a}\right) & \mbox{if } x\in I;\\
0 & \mbox{otherwise.}
\end{array} \right.
\label{phi}
\end{eqnarray}
Each $\phi_I$ is Kurzweil integrable (with ${(K)\!\!\int} \phi_I = 0$), but is not Lebesgue integrable, since its positive and negative parts have infinite integral. For each natural number $k$, consider $I_k\doteq [2^{k+1},2^k]$. Clearly $\{\phi_{I_k}:k\in \mathbb{N}\}$ is linearly independent. We can now state our first result on lineability for sets of Kurzweil integrable functions:

\begin{proposition}

$span(\{\Phi_{I_k}:k\in \mathbb{N}\}) \subset \left( C^0([0,1]) \setminus AC_0\right)\cup\{0\}$. In particular, $\left( C^0([0,1]) \setminus AC_0\right)$ and $\mathcal{J}$ are lineable. 

\label{KLlin}
\end{proposition}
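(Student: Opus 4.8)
\textit{Proof proposal.} The plan is to establish the span inclusion first, since both lineability assertions then follow at once from the linear independence of $\{\Phi_{I_k}:k\in\mathbb{N}\}$ (hence also of $\{\phi_{I_k}:k\in\mathbb{N}\}$) already recorded in the text. So I would fix a nonzero $\Psi = \sum_{k=1}^n c_k\Phi_{I_k}$ in $span(\{\Phi_{I_k}:k\in\mathbb{N}\})$, choosing $k_0$ with $c_{k_0}\neq 0$. Each $\Phi_{I_k}$ is everywhere differentiable on $[0,1]$, as noted right after (\ref{Phi}), so the finite sum $\Psi$ is everywhere differentiable with $\Psi' = \sum_{k=1}^n c_k\phi_{I_k}$; thus $\Psi\in C^0([0,1])$.

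The next step is to show $\Psi\notin AC_0$. Since the intervals $I_k$ are pairwise disjoint up to endpoints, on the interior of $I_{k_0}$ we have $\Psi' = c_{k_0}\phi_{I_{k_0}}$ almost everywhere. Were $\Psi$ absolutely continuous on $[0,1]$, it would be absolutely continuous on $I_{k_0}$, and therefore $\Psi'$ would be Lebesgue integrable on $I_{k_0}$; but $\phi_{I_{k_0}}$ is not Lebesgue integrable there, since, as observed just after (\ref{phi}), its positive and negative parts have infinite integral near the endpoints of $I_{k_0}$. This contradiction yields $\Psi\notin AC_0$, and hence $span(\{\Phi_{I_k}:k\in\mathbb{N}\})\subset\left(C^0([0,1])\setminus AC_0\right)\cup\{0\}$. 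Lineability of $C^0([0,1])\setminus AC_0$ is then immediate, as $\{\Phi_{I_k}:k\in\mathbb{N}\}$ spans an infinite-dimensional subspace lying in that set together with $0$.

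For $\mathcal{J}$, I would instead look at $span(\{\phi_{I_k}:k\in\mathbb{N}\})$, which is infinite-dimensional by the linear independence of $\{\phi_{I_k}\}$. A nonzero element $\sum_{k=1}^n c_k\phi_{I_k}$ is Kurzweil integrable, being a finite sum of Kurzweil integrable functions, and its Kurzweil primitive is exactly the $\Psi$ above; if it were also Lebesgue integrable, then by Proposition \ref{|f|} its Kurzweil and Lebesgue primitives would coincide, forcing $\Psi\in AC_0$, contrary to what was just shown. Hence $span(\{\phi_{I_k}:k\in\mathbb{N}\})\subset\mathcal{J}\cup\{0\}$ and $\mathcal{J}$ is lineable. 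I do not expect a genuine obstacle here: the only point needing a little care is the localization argument — deducing the failure of global absolute continuity of $\Psi$ from the failure of Lebesgue integrability of $\Psi'$ on the single subinterval $I_{k_0}$ — which rests entirely on the disjointness of the $I_k$ and on the non-$L^1$ property of $\phi_I$ already isolated in the construction.
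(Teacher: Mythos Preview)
Your proposal is correct and follows essentially the same route as the paper: localize to a single interval $I_{k_0}$ where the coefficient is nonzero, and use that $\phi_{I_{k_0}}$ fails to be Lebesgue integrable there. The paper phrases this directly at the level of derivatives---reducing to $span(\{\phi_{I_k}\})\cap L^1=\{0\}$ and observing $f|_{I_{k_0}}=\alpha_{k_0}\phi_{I_{k_0}}\notin L^1$---whereas you phrase it at the level of primitives via absolute continuity, but the two formulations are equivalent through the identification $L^1\cong AC_0$, and your slightly longer detour through Proposition~\ref{|f|} for the $\mathcal{J}$ conclusion is harmless.
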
 

\begin{proof} It suffices to show that $span(\{\phi_{I_k}:k\in \mathbb{N}\})\cap L^1([0,1]) = \{0\}$. Let $f$ be a nontrivial linear combination of the $\phi_{I_k}$. Then there is a nonzero sequence $(\alpha_k)_k\in c_{00}$ such that
$$f = \sum_k \alpha_k\phi_{I_k}.$$ 
Suppose that $\alpha_{k_0}\neq 0$. Then $f|_{I_{k_0}} = \alpha_{k_0}\phi_{I_{k_0}}$, which is not Lebesgue integrable. 
\end{proof}

\begin{remark}
It is natural to ask wether Proposition \ref{KLlin} can be generalized analogously to Corollary \ref{corolGspaceable} or not. That is, whether the set of Kurzweil integrable functions which are not Lebesgue integrable in \emph{any} subinterval is lineable or not. The generalization is impossible, the reason being that, as it is known (see again \cite{gor}), each Kurzweil integrable function must be Lebesgue integrable in some subinterval.  
\end{remark}

\begin{proposition}
\label{propKspac}
$\overline{span(\{\phi_{I_k}:k\in \mathbb{N}\})}^{\|\cdot\|_A} \subset \mathcal{J}\cup\{0\}$. In particular, $\mathcal{J}$ is spaceable in $K([0,1])$. 

\end{proposition}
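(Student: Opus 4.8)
The plan is to mimic the argument used for Theorem \ref{Gspaceable}, exploiting the disjoint-support structure of the functions $\phi_{I_k}$ to produce a basic sequence whose closed span consists (apart from $0$) entirely of functions that are Kurzweil integrable but fail Lebesgue integrability. The starting observation is that the intervals $I_k = [2^{k+1},2^k]$ — here one should of course read $I_k = [2^{-(k+1)},2^{-k}]$ — are pairwise non-overlapping, so $\phi_{I_k}$ is supported in $I_k$, and hence for any scalars the function $\sum_k \alpha_k \phi_{I_k}$ restricted to $I_{k_0}$ is just $\alpha_{k_0}\phi_{I_{k_0}}$. The key quantitative input is a norm estimate: since each $\Phi_{I_k}$ is supported in $I_k$ and $\|\Phi_{I_k}\|_\infty = \|\Phi\|_\infty$, the Alexiewicz norm of a finite sum $\sum_{k=1}^m \alpha_k \phi_{I_k}$ equals the sup norm of its primitive, and that primitive is $\sum_{k=1}^m \alpha_k \Phi_{I_k}$, whose value on each $I_k$ depends only on $\alpha_k$. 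Consequently
$$
\Bigl\|\sum_{k=1}^{m_1} \alpha_k \phi_{I_k}\Bigr\|_A \;\le\; \Bigl\|\sum_{k=1}^{m_2} \alpha_k \phi_{I_k}\Bigr\|_A
\qquad \text{whenever } m_1 \le m_2,
$$
because the supremum on the left is taken over a subset of the points where the supremum on the right is taken (the primitives agree on $\bigcup_{k\le m_1} I_k$, and outside that set one sum is constant while the larger one may be larger in modulus — here a small care is needed, see below). This shows $(\phi_{I_k})_k$ is a basic sequence and hence a Schauder basis of $E \doteq \overline{\mathrm{span}(\{\phi_{I_k}\})}^{\|\cdot\|_A}$.

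Next I would take an arbitrary nonzero $f \in E$ and expand it, using the Schauder basis, as $f = \sum_k \alpha_k \phi_{I_k}$ with $(\alpha_k)$ not identically zero; fix $k_0$ with $\alpha_{k_0}\neq 0$. The primitive $F$ of $f$ (which exists and is the $C([0,1])$-limit of the partial-sum primitives, by definition of $\|\cdot\|_A$) agrees on $I_{k_0}$ with $\alpha_{k_0}\Phi_{I_{k_0}}$, since the coordinate functionals are continuous and the other basis elements vanish there. Therefore $f = F'$ equals $\alpha_{k_0}\phi_{I_{k_0}}$ on the interior of $I_{k_0}$. As noted in the construction just before Proposition \ref{KLlin}, $\phi_{I_{k_0}}$ is Kurzweil integrable on $[0,1]$ but not Lebesgue integrable, because its positive and negative parts have infinite integral near the left endpoint of $I_{k_0}$. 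Hence $f$ restricted to $I_{k_0}$ is not Lebesgue integrable, so by Proposition \ref{|f|} (applied on the subinterval $I_{k_0}$, noting $|f| = |\alpha_{k_0}|\,|\phi_{I_{k_0}}|$ there) $f$ is not Lebesgue integrable on $[0,1]$. On the other hand $f \in K([0,1])$ automatically, being an element of $E \subset K([0,1])$. Thus $f \in \mathcal{J}$, which is exactly $\overline{\mathrm{span}(\{\phi_{I_k}\})}^{\|\cdot\|_A} \subset \mathcal{J}\cup\{0\}$; since $E$ is a closed infinite-dimensional subspace of $K([0,1])$ (infinite-dimensionality coming from the already-noted linear independence of $\{\phi_{I_k}\}$), $\mathcal{J}$ is spaceable.

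The step I expect to require the most care is the monotonicity inequality for the Alexiewicz norm, i.e.\ verifying that $(\phi_{I_k})$ is genuinely a basic sequence. The naive claim ``truncating the sum cannot increase the sup of the primitive'' is not literally true for an arbitrary basis, so one must use the special geometry here: the primitive of $\sum_{k=1}^{m}\alpha_k\phi_{I_k}$ is $\sum_{k=1}^m\alpha_k\Phi_{I_k}$, a function that on each $I_j$ (with $j\le m$) equals $\alpha_j\Phi_{I_j}$ and is identically $0$ outside $\bigcup_{k\le m} I_k$; in particular $\|\sum_{k=1}^m\alpha_k\phi_{I_k}\|_A = \max_{1\le k\le m}|\alpha_k|\cdot\|\Phi\|_\infty$. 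From this closed form the inequality $\max_{k\le m_1}|\alpha_k| \le \max_{k\le m_2}|\alpha_k|$ for $m_1\le m_2$ is immediate, and it shows $(\phi_{I_k})$ is even isometrically equivalent (after scaling by $\|\Phi\|_\infty$) to the unit vector basis of $c_0$; so $E \cong c_0$. This makes the basic-sequence claim transparent and also confirms $E$ is infinite-dimensional and closed. Everything else is a routine consequence of the construction preceding Proposition \ref{KLlin} and of Propositions \ref{|f|} and \ref{KLlin}.
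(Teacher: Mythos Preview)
Your proof is correct and follows essentially the same route as the paper: exploit the disjoint supports of the $\phi_{I_k}$ to recover the coefficients of any limit function, then restrict to a single $I_{k_0}$ with nonzero coefficient to conclude non-Lebesgue-integrability exactly as in Proposition~\ref{KLlin}. The paper compresses the coefficient-recovery step into the phrase ``an easy computation,'' whereas you make it fully explicit via the closed-form $\bigl\|\sum_{k=1}^m \alpha_k \phi_{I_k}\bigr\|_A = \|\Phi\|_\infty \max_{k\le m}|\alpha_k|$, which in addition yields the pleasant bonus that the closed span is isometric (after rescaling) to $c_0$; this is more information than the paper records but not a genuinely different method.
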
 

\begin{proof} Suppose that $f\in \overline{span(\{\phi_{I_k}:k\in \mathbb{N}\})}^{\|\cdot\|_A}$. Consider a sequence $(f^m)_m$, such that each $f^m \in span(\{\phi_{I_k}:k\in \mathbb{N}\})$. Then for each $m$ there is a sequence $(\alpha_k^m)\in c_{00}$ such that 
$$f^m = \sum_k \alpha_k^m \phi_{I_k}.$$
An easy computation gives us that, for each $k$, $(\alpha_k^m)_m$ converges in $m$ to a real number $\alpha_k$, and that we can write
\begin{eqnarray}
f = \sum_k \alpha_k \phi_{I_k}.
\label{eqf}
\end{eqnarray}
An argument identical to the one in the proof of Proposition \ref{KLlin} shows that $f$ is not Lebesgue integrable. \end{proof}

If we consider, instead of the closure in $K([0,1])$ of $span(\{\phi_{I_k}:k\in \mathbb{N}\})$, the closure in $C([0,1])$ of $span(\{\Phi_{I_k}:k\in \mathbb{N}\})$ (with the sup norm), then it is easily seen that a function $F \in \overline{span(\{\Phi_{I_k}:k\in \mathbb{N}\})}^{\|\cdot\|_\infty}$, as in (\ref{eqf}), is of the form 
$$F = \sum_k \alpha_k \Phi_{I_k}.$$
For each small $\epsilon >0$, $F|_{[\epsilon,1]}\in \mathbb{K}([\epsilon,1])$, and by the continuity of $F$ the limit $\lim_{\epsilon\rightarrow 0}(F(1) - F(\epsilon))$ exists. Then by Hake's Theorem \ref{teohake}, $F\in \mathbb{K}([0,1])$. Thus, in addition to Proposition \ref{propKspac}, we have the following:

\begin{proposition}
\label{gura}
Given $x_0\in [0,1]$, $C^0([0,1])$ admits an infinite dimensional subspace $S$ such that each element $F$ of $\overline{S}^{\|\cdot\|_\infty}$ in $C([0,1])$ satisfies: 
\begin{enumerate}
\item $F$ is a primitive to a Kurzweil integrable function; 
\item $F$ is differentiable everywhere except perhaps at $x_0$.

\end{enumerate}  

\end{proposition}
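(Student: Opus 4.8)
The plan is to build the space $S$ by a rescaled translation of the construction already used for $\mathcal{J}$, arranged so that the ``bad'' point of each building block is pushed to $x_0$. First I would fix $x_0$ and choose a sequence of pairwise disjoint subintervals $I_k \subset [0,1]$ whose union accumulates only at $x_0$ (for instance, if $x_0 = 0$, take $I_k = [2^{-(k+1)}, 2^{-k}]$ as in the excerpt; for general $x_0$, take a sequence of intervals on one side of $x_0$ shrinking to $x_0$, and if $x_0$ is interior one may simply use intervals accumulating to $x_0$ from the right). On each $I_k$ place the everywhere-differentiable bump $\Phi_{I_k}$ defined in \eqref{Phi}, whose derivative $\phi_{I_k}$ is Kurzweil but not Lebesgue integrable. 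Set $S \doteq \operatorname{span}(\{\Phi_{I_k} : k \in \mathbb{N}\})$; this is an infinite-dimensional subspace of $C^0([0,1])$ since each $\Phi_{I_k}$ is everywhere differentiable and the family is linearly independent (the supports are essentially disjoint).

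Next I would identify the elements of the $C([0,1])$-closure of $S$. Exactly as in the discussion preceding Proposition \ref{gura}, an argument on the coefficients (the $\Phi_{I_k}$ have disjoint supports and comparable sup-norms after normalization, so a Cauchy sequence in $\operatorname{span}$ has coefficientwise limits) shows that every $F \in \overline{S}^{\|\cdot\|_\infty}$ has the form $F = \sum_k \alpha_k \Phi_{I_k}$ for a bounded scalar sequence $(\alpha_k)$, the series converging uniformly because the supports are disjoint and the bumps shrink. For property (2): on each open interval $\operatorname{int} I_k$, $F$ agrees with $\alpha_k \Phi_{I_k}$, hence is differentiable there; at any point of $[0,1] \setminus (\{x_0\} \cup \bigcup_k I_k)$ that lies outside the closure of $\bigcup_k I_k$, $F$ vanishes in a neighborhood, so is differentiable; and at the endpoints shared by consecutive $I_k$ (or at the lone endpoint of each $I_k$ not facing $x_0$) one checks directly from the explicit form of $\Phi$ that the one-sided derivatives both vanish, since $\Phi$ and $\Phi'$ vanish at $0$ and at $1$. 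The only point where differentiability can fail is $x_0$, which is the accumulation point of the supports. This gives (2).

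For property (1), I would invoke Hake's Theorem \ref{teohake}, just as in the paragraph before the statement. For every small $\epsilon > 0$ with $\epsilon$ to one side of $x_0$, the restriction $F|_{[\epsilon,1]}$ (or the appropriate restriction away from $x_0$) involves only finitely many of the $I_k$, on each of which $F$ is a scalar multiple of an everywhere-differentiable function, hence $F|_{[\epsilon,1]}$ is the primitive of a Kurzweil integrable function on that subinterval; and since $F$ is continuous on $[0,1]$, the limit $\lim_{\epsilon \to x_0}(F(\text{endpoint}) - F(\epsilon))$ exists. Hake's Theorem then yields that the derivative of $F$ (defined off $x_0$, and set arbitrarily at $x_0$) is Kurzweil integrable on $[0,1]$ with $F$ as a primitive. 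If $x_0$ is an interior point one applies Hake's Theorem on $[0,x_0]$ and on $[x_0,1]$ separately and adds.

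The main obstacle is the bookkeeping at the accumulation point $x_0$ when it is interior: one must choose the intervals $I_k$ so that they accumulate at $x_0$ from (say) the right only, or from both sides in a controlled way, and verify that the one-sided derivatives of the limit function $F$ at the ``inner'' endpoints genuinely vanish rather than merely being bounded — this is where the quadratic factor $x^2$ in the definition of $\Phi$ (as opposed to a linear one) is essential, since it is what makes $\Phi'$ vanish at the endpoints and hence makes the glued function differentiable across interval boundaries. Everything else is a routine repackaging of Propositions \ref{KLlin} and \ref{propKspac} together with Hake's Theorem.
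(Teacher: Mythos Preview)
Your proposal is correct and follows essentially the same approach as the paper: take $S=\operatorname{span}\{\Phi_{I_k}\}$ with the $I_k$ accumulating only at $x_0$, identify closure elements as $\sum_k\alpha_k\Phi_{I_k}$, and apply Hake's Theorem for (1) while checking differentiability off $x_0$ for (2). One small correction: the bumps do \emph{not} shrink in sup norm (each $\|\Phi_{I_k}\|_\infty=\|\Phi\|_\infty$), so the uniform convergence of $\sum_k\alpha_k\Phi_{I_k}$ comes instead from $(\alpha_k)\in c_0$, which is forced because approximating by finitely supported coefficient sequences in the disjoint-support sup norm requires the tail coefficients to vanish.
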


\begin{remark} As it was mentioned, Gurariy proved that $C^0([0,1])$ is not spaceable in $C([0,1])$, which means that every infinite dimensional space of differentiable functions has limit points which are not differentiable.  Proposition \ref{gura} allows us to expect some control on the closure of infinite dimensional spaces of differentiable functions, if these spaces are constructed carefully. This raise the natural question: which are the \emph{minimal} conditions that a closed subspace of $C([0,1])$, containing an infinite dimensional subspace of $C^0([0,1])$, must have?  In \cite{bpp}, Bongiorno, Di Piazza and Preiss have given a constructive definition of integral, called \emph{C-integral}, whose set of primitives is exactly $AC+C^0([0,1])$. Motivated by this we might ask if there is a closed subspace $E$ of $C([0,1])$, containing an infinite dimensional subspace of $C^0([0,1])$, and such that $E\subset AC+C^0([0,1])$.  Since there is not a Hake-like Theorem for the C-integral, we are not able to answer this question with an argument analogue to the one used to prove Proposition \ref{gura}. 
\end{remark}

\subsection{Remark on the algebrability of $\mathcal{J}$}

Unlike the sets studied in the previous sections (see Theorems \ref{lnebalg} and \ref{Falg}), the set $\mathcal{J}\cup \{0\}$ does not admit algebraic structures within it at all. This is a direct consequence of the following fact:  if $g$ is a Kurzweil integrable function such that $g^2$ is Kurzweil integrable, then $g^2$ is also Lebesgue integrable by Proposition \ref{|f|}.


\bibliographystyle{amsplain}

\begin{thebibliography}{99}

\bibitem{ags} R. Aron, V. I. Gurariy, J. B. Seoane, 
\textit{Lineability and spaceability of set of functions on $\mathbb{R}$,}
Proc. Amer. Math. Soc., \textbf{133} (2005), 795--803.  

\bibitem{as} R. Aron, J. B. Seoane-Sep\'ulveda,
\textit{Algebrability of the set of everywhere surjective functions on $\mathbb{C}$,} 
Bull. Belg. Math. Soc. Simon Stevin, \textbf{14} (2007), 25--31. 



\bibitem{bsz} A. Bartoszewicz and Sz. G\l \c ab, 
\textit{Strong algebrability of sets of sequences and functions,}
Proc. Amer. Math. Soc. (to appear). 

\bibitem{bpp} B. Bongiorno, L. Di Piazza, and D. Preiss, 
\textit{A constructive minimal integral which includes Lebesgue integrable functions and derivatives,}
J. London Math. Soc., \textbf{62} (2000), no. 1, 117--126. 

\bibitem{eg} P. Enflo,  V. I. Gurariy and  J. B. Seoane-Sepúlevda
\textit{Some results and open questions on spaceability in function spaces,} 
Trans. Amer. Math. Soc.  (to appear). 




\bibitem{gms} F. J. Garc\'{i}a-Pacheco, M. Mart\'{i}n, and J. B. Seoane-Sep\'ulveda. \textit{Lineability,
spaceability, and algebrability of certain subsets of function spaces,} Taiwanese
J. Math., \textbf{13} (2009), no. 4, 1257--1269.


\bibitem{pathos} F. J. Garc\'ia-Pacheco, N. Palmberg and J. B. Seoane-Sep\'ulveda, 
\textit{Lineability and algebrability of pathological phenomena in analysis,} 
J. Math. Anal. Appl., \textbf{326} (2007), 929--939. 


\bibitem{gor} R. A. Gordon, 
\textit{The Integrals of Lebesgue, Denjoy, Perron and Henstock,}
Amer. Math. Soc., Providence, RI, 1994. 

\bibitem{gurrus}  V. I.  Gurariy, 
\textit{Subspaces and bases in spaces of continuous functions (Russian),}
Dokl. Akad. Nauk SSSR, \textbf{167} (1966), 971--973. 

\bibitem{gur}  V. I. Gurariy,
\textit{Linear spaces composed of nondifferentiable functions,}
C. R. Acad. Bulgare Sci., \textbf{44} (1991), 4 13--16. 



\end{thebibliography}

\end{document}